\newcommand{\circt}{\mathbin{\ooalign{$\ocircle$\cr\hidewidth\raise-.15ex\hbox{$\scriptstyle\top\mkern2.1mu$}\cr}}} 
\newcommand{\smCirct}{\mathbin{\ooalign{$\scriptstyle\ocircle$\cr\hidewidth\raise-.12ex\hbox{$\scriptscriptstyle\top\mkern1mu$}\cr}}}  
\renewcommand{\PrintDOI}[1]{%
    \href{http://dx.doi.org/#1}{{\tt DOI:#1}}%
}
\renewcommand{\eprint}[1]{#1}
\newcommand\Dhat{\hat\Delta}
\newcommand\sgn{\operatorname{sgn}}
\newcommand{\T}{\mathbb{T}}
\newcommand{\Tor}{\operatorname{Tor}}
\newcommand\bp{\begin{proof}}
\newcommand\ep{\end{proof}}
\newcommand\enu[1]{\smallskip\newline\makebox[5mm][l]{\rm(#1)}}
\newcommand{\SL}{\mathrm{SL}}
\newcommand{\SU}{\mathrm{SU}}
\newcommand{\Spin}{\mathrm{Spin}}
\newcommand{\U}{\mathrm{U}}
\newcommand{\liealg}[1]{\mathfrak{#1}}
\newcommand{\nat}{\textrm{nat}}
\newcommand{\pairing}[2]{\left \langle #1, #2 \right \rangle}
\newcommand\A{\mathcal A}
\newcommand{\C}{\mathbb{C}}
\newcommand{\Z}{\mathbb{Z}}
\newcommand{\N}{\mathbb{N}}
\newcommand{\G}{\mathbb{G}}
\newcommand\HH{\mathbb H}
\newcommand{\KK}{\mathrm{KK}}
\newcommand{\absv}[1]{\left \lvert #1 \right \rvert}
\newcommand{\ensm}[1]{\left \{ #1 \right \}}
\newcommand{\Hsp}{\mathcal{H}}
\newcommand{\Hilbf}{\mathrm{Hilb}_\textrm{f}}
\newcommand{\KZ}{\textrm{KZ}}
\newcommand{\ad}{\mathrm{ad}}
\newcommand{\Ad}{\mathrm{Ad}}
\newcommand{\Uni}{\mathcal{U}}
\newcommand{\EUni}{\tilde{\mathcal{U}}}
\newcommand\CC{\mathcal C}
\newcommand\RR{\mathcal R}
\DeclareMathOperator{\ract}{\lhd}
\DeclareMathOperator{\lact}{\rhd}
\DeclareMathOperator{\Ind}{Ind}
\DeclareMathOperator{\Prim}{Prim}
\DeclareMathOperator{\Stab}{Stab}
\DeclareMathOperator{\Rep}{Rep}
\DeclareMathOperator{\Irrep}{Irrep}
\DeclareMathOperator{\Map}{Map}
\DeclareMathOperator{\Hom}{Hom}
\DeclareMathOperator{\qdet}{qdet}
\numberwithin{equation}{section}
\theoremstyle{plain}
\newtheorem{thm}{Theorem}[section]
\newtheorem{prop}[thm]{Proposition}
\newtheorem{cor}[thm]{Corollary}
\newtheorem{lem}[thm]{Lemma}
\theoremstyle{definition}
\newtheorem{defn}[thm]{Definition}
\theoremstyle{remark}
\newtheorem{remk}[thm]{Remark}
\begin{document}

\title[twisting quantum groups]{Twisting the $q$-deformations of compact semisimple Lie groups}

\date{May 29, 2013; minor changes July 6, 2013}

\author[S. Neshveyev]{Sergey Neshveyev}

\email{sergeyn@math.uio.no}

\address{Department of Mathematics, University of Oslo,
P.O. Box 1053 Blindern, NO-0316 Oslo, Norway}

\thanks{The research leading to these results has received funding from the European Research Council
under the European Union's Seventh Framework Programme (FP/2007-2013) / ERC Grant Agreement no. 307663
}

\author[M. Yamashita]{Makoto Yamashita}

\email{yamashita.makoto@ocha.ac.jp}

\address{Department of Mathematical Sciences, University of Copenhagen,
Universitetsparken 5, 2100-K{\o}benhavn-\O, Denmark
(on leave from Ochanomizu University)}

\thanks{Supported by the Danish National Research Foundation through the Centre
for Symmetry and Deformation (DNRF92), and by JSPS KAKENHI Grant Number 25800058}

\begin{abstract}
Given a compact semisimple Lie group $G$ of rank $r$, and a parameter $q>0$, we can define new associativity morphisms in $\Rep(G_q)$ using a $3$-cocycle $\Phi$ on the dual of the center of~$G$, thus getting a new tensor category $\Rep(G_q)^\Phi$. For  a class of cocycles $\Phi$ we construct compact quantum groups~$G^\tau_q$ with representation categories $\Rep(G_q)^\Phi$. The construction depends on the choice of an $r$-tuple $\tau$ of elements in the center of $G$. In the simplest case of $G=\SU(2)$ and $\tau=-1$, our construction produces Woronowicz's quantum group $\SU_{-q}(2)$ out of $\SU_q(2)$. More generally, for $G=\SU(n)$, we get quantum group realizations of the Kazhdan--Wenzl categories.
\end{abstract}

\maketitle

\section*{Introduction}

A known problem in the theory of quantum groups is classification of quantum groups with fusion rules of a given Lie group $G$, see e.g.~\citelist{\cite{MR943923} \cite{MR1266253} \cite{MR1378260} \cite{MR1673475} \cite{MR2023750} \cite{MR2106933} \cite{mrozinski}}. Although this problem has been completely solved in a few cases, most notably for $G=\SL(2,\C)$ \citelist{\cite{MR1378260} \cite{MR2023750}}, as the rank of $G$ grows the situation quickly becomes complicated. Already for $G=\SL(3,\C)$, even when requiring the dimensions of the representations to remain classical, one gets a large list of quantum groups that is not easy to grasp \citelist{\cite{MR1673475} \cite{MR2106933}}. A categorical version of the same problem turns out to be more manageable. Namely, the problem is to classify semisimple rigid monoidal $\C$-linear categories with fusions rules of $G$. As was shown by Kazhdan and Wenzl~\cite{MR1237835}, for $G=\SL(n,\C)$ such categories $\CC$ are parametrized by pairs $(q_\CC,\tau_\CC)$ of nonzero complex numbers, defined up to replacing $(q_\CC,\tau_\CC)$ by $(q_\CC^{-1},\tau_\CC^{-1})$, such that $q_\CC^{n(n-1)/2}=\tau_\CC^n$ and $q_\CC$ is not a nontrivial root of unity.\footnote{This is not how the result is formulated in~\cite{MR1237835}. There is a known mistake in~\cite{MR1237835}*{Proposition~5.1}, see \cite{MR2825504}*{Section~7} for a discussion.} Concretely, these are twisted representation categories $\CC=\Rep(\SL_q(n))^\zeta$, where $q$ is not a nontrivial root of unity and $\zeta$ is a root of unity of order $n$; the corresponding parameters are $q_\CC=q^2$ and $\tau_\CC=\zeta^{-1}q^{n-1}$. The twists are defined by choosing a $\T$-valued $3$-cocycle on the dual of the center of $\SL(n,\C)$ and by using this cocycle to define new associativity morphisms in $\Rep(\SL_q(n))$. The third cohomology group of the dual of the center is cyclic of order $n$, and this explains the parametrization of twists of $\Rep(\SL_q(n))$ by roots of unity. A partial extension of the result of Kazhdan and Wenzl to types~$\mathrm{BCD}$ was obtained by Tuba and Wenzl~\cite{MR2132671}.

Although two problems are clearly related, a solution of the latter does not immediately say much about the former. The present work is motivated by the natural question whether there exist quantum groups with representation categories $\Rep(\SL_q(n))^\zeta$ for all $\zeta$ such that $\zeta^n=1$. Equivalently, do the categories $\Rep(\SL_q(n))^\zeta$ always admit fiber functors? For $n=2$ there is essentially nothing to solve, since for $q\ne1$ the category $\Rep(\SL_q(2))^{-1}$ is equivalent to $\Rep(\SL_{-q}(2))$. For $q=1$ the answer is also known: the quantum group $\SU_{-1}(2)$ defined by Woronowicz (which has nothing to do with the quantized universal enveloping algebra~$\Uni_{q}(\liealg{sl}_2)$ at~$q=-1$) has representation category $\Rep(\SL(2,\C))^{-1}$. For~$n\ge2$, quantum groups with fusion rules of $\SL(n,\C)$ have been studied by many authors, see e.g.~\cite{MR1809304} and the references therein. Usually, one starts by finding a solution of the quantum Yang--Baxter equation satisfying certain conditions, and from this derives a presentation of the algebra of functions on the quantum group~\cite{MR1015339}. This approach cannot work in our case, since the category $\Rep(\SL_q(n))^\zeta$ does not have a braiding unless $\zeta^2 = 1$.

The approach we take works, to some extent, for any compact semisimple simply connected Lie group $G$. Assume that $\Phi$ is a $\T$-valued $3$-cocycle on the dual of the center of $G$. To construct a fiber functor $\varphi$ from the category $\Rep(G_q)^\Phi$ with associativity morphisms defined by $\Phi$, such that $\dim \varphi(U)=\dim U$, is the same as to find an invertible element $F$ in a completion $\Uni(G_q\times G_q)$ of $\Uni_q(\liealg{g})\otimes \Uni_q(\liealg{g})$ satisfying
$$
\Phi = (\iota \otimes \hat{\Delta}_q)(F^{-1}) (1 \otimes F^{-1}) (F \otimes 1) (\hat{\Delta}_q \otimes \iota)(F).
$$
Then, using the twist (or a pseudo-$2$-cocycle in the terminology of~\cite{MR1395206}) $F$, we can define a new comultiplication on $\Uni(G_q)$, thus getting a new quantum group with representation category $\Rep(G_q)^\Phi$.

Our starting point is the simple remark that to solve the above cohomological equation we do not have to go all the way to $G_q$, it might suffice to pass from the center $Z(G)$ to a (quantum) subgroup of $G_q$, for example, to the maximal torus $T$. For simple $G$ this is indeed enough: any $3$-cocycle on $\widehat{Z(G)}$ becomes a coboundary when lifted to the dual $P=\hat T$ of $T$. The reason is that, for simple $G$, the center is contained in a torus of dimension at most $2$. However, a $2$-cochain $f$ on $P$ such that $\partial f=\Phi$ is unique only up to a $2$-cocycle on $P$. Already for trivial $\Phi$ this leads to deformations of~$G_q$ by $2$-cocycles on $P$ that are not very well studied~\citelist{\cite{MR1127037} \cite{MR1116413}}, with associated C$^*$-algebras of functions (for $q>0$) that are typically not of type I.

Our next observation is that, for arbitrary~$G$, if $\Phi$ lifts to a coboundary on $P$, then the cochain $f$ can be chosen to be of a particular form. This leads to a very special class of quantum groups $G^\tau_q$, whose construction depends on the choice of elements $\tau_1,\dots,\tau_r\in Z(G)$, where $r$ is the rank of $G$. We show that the quantum groups~$G^\tau_q$ are as close to~$G_q$ as one could hope. For example, they can be defined in terms of finite central extensions of~$\Uni_q(\liealg{g})$.

Since we are, first of all, interested in compact quantum groups in the sense of Woronowicz, we will concentrate on the case $q>0$, when the categories $\Rep(G_q)^\Phi$ have a C$^*$-structure and, correspondingly, $G^\tau_q$ become compact quantum groups. We then show that the C$^*$-algebras $C(G^\tau_q)$ are $\KK$-isomorphic to $C(G)$, they are of type I, and their primitive spectra are only slightly more complicated than that of $C(G_q)$. For $G=\SU(n)$ we also find explicit generators and relations of the algebras $\C[\SU^\tau_q(n)]$ of regular functions on $\SU^\tau_q(n)$.

To summarize, our construction produces quantum groups with nice properties and with representation category $\Rep(G_q)^\Phi$ for any $3$-cocycle $\Phi$ on $\widehat{Z(G)}$ that lifts to a coboundary on $\hat T$. This covers the cases when $G$ is simple, but in the general semisimple case there exist cocycles that do not have this property. For such cocycles the existence of fiber functors for $\Rep(G_q)^\Phi$ remains an open problem.

\medskip

\paragraph{\bf Acknowledgment} We would like to thank Kenny De Commer for stimulating discussions and valuable comments.

\bigskip

\section{Preliminaries}

\subsection{Compact quantum groups}
\label{sec:comp-quant-groups}

A \textit{compact quantum group} $\G$ is given by a unital C$^*$-algebra $C(\G)$ together with a coassociative unital $*$-homomorphism $\Delta\colon C(\G) \rightarrow C(\G) \otimes C(\G)$ satisfying the cancellation condition
\[
[\Delta(C(\G)) (C(\G) \otimes 1)] = C(\G) \otimes C(\G) = [\Delta(C(\G)) (1 \otimes C(\G))],
\]
where brackets denote the closed linear span. Here we only introduce the relevant terminology and summarize the essential results, see e.g.~\cite{neshveyev-tuset-book} for details.

A theorem of Woronowicz gives a distinguished state $h$, the Haar state, which is an analogue of the normalized Haar measure over compact groups. Denote by $C_r(\G)$ the quotient of $C(\G)$ by the kernel of the GNS-representation defined by $h$.  We will be interested in the case where $h$ is faithful, so that $C_r(\G)=C(\G)$.  This condition is automatically satisfied for coamenable compact quantum groups.  The quantum groups studied in this paper will be coamenable thanks to Banica's theorem~\citelist{\cite{MR1679171}*{Proposition~6.1} \cite{neshveyev-tuset-book}*{Theorem~2.7.14}}.

A finite dimensional unitary representation of $\G$ is given by a unitary element $U \in B(\Hsp_U) \otimes C(\G)$ satisfying the condition $U_{1 3} U_{2 3} = (\iota \otimes \Delta)(U)$. The tensor product of two representations is defined by $U\circt V=U_{13}V_{23}$. The category $\Rep(\G)$ of finite dimensional unitary representations of $\G$ has the structure of a rigid C$^*$-tensor category with a unitary fiber functor (`forgetful functor') $U \mapsto \Hsp_U$ to the category $\Hilbf$ of finite dimensional Hilbert spaces. Woronowicz's Tannaka--Krein duality theorem states that the reduced quantum group $(C_r(\G), \Delta)$ can be axiomatized in terms of $\Rep(\G)$ and the fiber functor.

We denote by $\C[\G]\subset C(\G)$ the Hopf $*$-algebra of matrix coefficients of finite dimensional representations of $\G$. Denote by $\Uni(\G)$ the dual $*$-algebra of $\C[\G]$, so $\Uni(\G) = \prod_{U \in \Irrep(\G)} B(\Hsp_U)$. It can be considered from many different angles: as the algebra of functions on the dual discrete quantum group $\hat\G$, as the algebra of endomorphisms of the forgetful functor, as the multiplier algebra of the convolution algebra $\widehat{\C[\G]}$ of $\G$. We also write $\Uni(\G^n)$ for $n \ge 2$ to denote the `tensor product' multipliers, such as
\[
\Uni(\G^2) = \prod_{U, V \in \Irrep(\G)} B(\Hsp_U) \otimes B(\Hsp_V).
\]
By duality, the multiplication map $m\colon\C[\G]\otimes\C[\G]\to\C[\G]$ defines a `coproduct' $\Dhat\colon\Uni(\G)\to\Uni(\G^2)$.

\subsection{Twisting of quantum groups}
\label{sec:twist-group-cocycl}

Let $\G$ be a compact quantum group, and $\Phi$ be an invariant unitary $3$-cocycle over the discrete dual of $\G$~\cite{neshveyev-tuset-book}*{Chapter~3}.  Thus, $\Phi$ is a unitary element in~$\Uni(\G^3)$ satisfying the cocycle condition
\begin{equation}\label{eq:3-cocycle}
( 1 \otimes \Phi ) (\iota \otimes \hat{\Delta} \otimes \iota)(\Phi) ( \Phi \otimes 1 ) = (\iota \otimes \iota \otimes \hat{\Delta})(\Phi) (\hat{\Delta} \otimes \iota \otimes \iota)(\Phi)
\end{equation}
and the invariance condition $[\Phi, (\hat{\Delta} \otimes \iota) \hat{\Delta} (x)] = 0$ for $x \in \Uni(\G)$.

Then, the representation category $\Rep(\G)$ can be twisted into a new C$^*$-tensor category $\Rep(\G)^\Phi$, by using the action by $\Phi$ on $\Hsp_U \otimes \Hsp_V \otimes \Hsp_W$ as the new associativity morphism $(U \circt V) \circt W \rightarrow U \circt (V \circt W)$ for $U, V, W \in \Rep(\G)$. The category $\Rep(\G)^\Phi$ can be considered as the module category of the discrete quasi-bialgebra $(\widehat{\C[\G]}, \hat{\Delta}, \Phi)$~\cite{MR1047964}.

Suppose the category $\Rep(\G)^\Phi$ is rigid. This is equivalent to the condition that the central element
$$
\Phi_1\hat S(\Phi_2)\Phi_3=m(m\otimes\iota)(\iota\otimes\hat S\otimes\iota)(\Phi)
$$
in $\Uni(\G)$ is invertible. Suppose also that there exists a  unitary $F \in \Uni(\G^2)$ such that
\begin{equation}\label{eq:Phi-cobdry-of-F}
\Phi = (\iota \otimes \hat{\Delta})(F^*) (1 \otimes F^*) (F \otimes 1) (\hat{\Delta} \otimes \iota)(F).
\end{equation}
Then the discrete quantum group $\Uni(\G)$ can be deformed into another one, with the new coproduct
$
\hat{\Delta}_F(x) = F \hat{\Delta}(x) F^*.
$
By duality, the function algebra $\C[\G]$ can be endowed with the new product $$x \cdot_F y = m(F^* \lact (x \otimes y) \ract F).$$  Here, $\lact$ and $\ract$ are the natural actions of $\Uni(\G)$ on $\C[\G]$ given by $X \lact a = \pairing{X}{a_{[2]}} a_{[1]}$ and $a \ract X = \pairing{X}{a_{[1]}} a_{[2]}$.  We denote the corresponding compact quantum group by $\G_F$. Note that in general there is no simple formula for the involution on $\C[\G_F]$, compare with \cite{neshveyev-tuset-book}*{Example~2.3.9}.

We have a unitary monoidal equivalence of the C$^*$-tensor categories $\Rep(\G)^\Phi$ and $\Rep(\G_F)$.  The tensor functor $\varphi\colon \Rep(\G)^\Phi\to \Rep(\G_F)$ is given by the identity map on objects and morphisms, but with the nontrivial tensor transformation $\varphi(U)\circt\varphi(V)\to\varphi(U\circt V)$ defined by
\[
\Hsp_U \otimes \Hsp_V \rightarrow \Hsp_U \otimes \Hsp_V, \quad \xi \otimes \eta \mapsto F^* (\xi \otimes \eta).
\]
In terms of fiber functors, $F$ gives a tensor functor $\Rep(\G)^\Phi \rightarrow \Hilbf$ which is the same as that of $\Rep(\G)$ on objects and morphisms, but with the modified tensor transformation $\Hsp_{U} \otimes \Hsp_V\to \Hsp_{U \smCirct V}$ given by $\xi \otimes \eta \mapsto F^*(\xi \otimes \eta)$.

Examples of invariant $3$-cocycles can be obtained as follows. Assume $\HH$ is a closed central subgroup of~$\G$, so $\HH$ is a compact abelian group and we are given a surjective homomorphism $\pi\colon\C[\G]\to\C[\HH]$ of Hopf $*$-algebras such that the image of $\Uni(\HH)$ under the dual homomorphism $\Uni(\HH)\to\Uni(\G)$ is a central subalgebra of $\Uni(\G)$, or equivalently, for any irreducible unitary representation~$U$ of~$\G$ the element $(\iota\otimes\pi)(U)$ has the form $1\otimes\chi_U$ for a character $\chi_U$ of $\HH$. Unitary $3$-cocycles in $\Uni(\HH^3)$ are nothing else than $\T$-valued 3-cocycles on the Pontryagin dual $\hat{\HH}$. Any such cocycle defines an invariant cocycle $\Phi$ in $\Uni(\G^3)$; when $\G$ is itself compact abelian, this is just the usual pullback homomorphism $Z^3(\hat\HH; \T) \rightarrow Z^3(\hat{\G}; \T)$. Explicitly, the action of $\Phi$ on $\Hsp_U \otimes \Hsp_V\otimes\Hsp_W$ is by multiplication by $\Phi(\chi_U,\chi_V,\chi_W)$. For such cocycles $\Phi$ the C$^*$-tensor category $\Rep(\G)^\Phi$ is always rigid.

\subsection{Quantized universal enveloping algebra}
\label{sec:quant-univ-envel}

Throughout the whole paper $G$ denotes a semi\-simple simply connected compact Lie group, and $\liealg{g}$ denotes its complexified Lie algebra.  We fix a maximal torus $T$ in $G$, and denote the corresponding Cartan subalgebra by $\liealg{h}$.  The root lattice is denoted by $Q$, and the weight lattice by $P$.  We fix a choice of positive roots, and denote the corresponding positive simple roots by $\ensm{\alpha_1, \ldots, \alpha_r}$. We also fix an $\ad$-invariant symmetric form on~$\liealg{g}$ such that it is negative definite on the real Lie algebra of $G$. If $G$ is simple, we assume that this form is standardly normalized, meaning that $(\alpha,\alpha)=2$ for every short root $\alpha$. The Cartan matrix is denoted by $(a_{i j})_{1 \le i, j \le r}$, and the Weyl group is denoted by $W$.  The center $Z(G)$ of $G$ is contained in $T$ and can be identified with the dual of $P/Q$.

The variable $q$ ranges over the strictly positive real numbers, although many results remain true for all $q\ne0$ that are not nontrivial roots of unity. For $q \neq 1$, the \textit{quantized universal enveloping algebra} $\Uni_q(\liealg{g})$ is the universal algebra over $\C$ generated by the elements $E_i$, $F_i$, and $K_i^{\pm 1}$ for $1 \le i \le r$ satisfying the relations
\begin{gather*}
[K_i, K_j] = 0, \quad K_i E_j K_i^{-1} = q_i^{a_{i j}} E_j, \quad K_i F_j K_i^{-1} = q_i^{-a_{i j}} F_j,\\
[E_i, F_j] = \delta_{i j} \frac{K_i - K_i^{-1}}{q_i - q_i^{-1}},\\
\sum_{k = 0}^{1-a_{i j}} (-1)^k
\left [ \begin{array}{c}
  1 - a_{i j}\\
  k
\end{array} \right ]_{q_i}
E_i^k E_j E_i^{1 - a_{i j} - k} = 0,\\
\sum_{k = 0}^{1-a_{i j}} (-1)^k
\left [ \begin{array}{c}
  1 - a_{i j}\\
  k
\end{array} \right ]_{q_i}
F_i^k F_j F_i^{1 - a_{i j} - k} = 0,
\end{gather*}
where $q_i = q^{(\alpha_i, \alpha_i) / 2}$.  It has the structure of a Hopf $*$-algebra defined by the operations
\begin{gather*}
\hat{\Delta}_q(E_i) = E_i \otimes 1 + K_i \otimes E_i,\quad
\hat{\Delta}_q(F_i) = F_i \otimes K_i^{-1} + 1 \otimes F_i, \quad
\hat{\Delta}_q(K_i) = K_i \otimes K_i,\\
\hat{S}_q(E_i) = - K_i^{-1} E_i,\quad
\hat{S}_q(F_i) = - F_i K_i^{-1},\quad
\hat{S}_q(K_i) = K_i^{-1},\\
\hat{\epsilon}_q(E_i) = \hat{\epsilon}_q(F_i) = 0, \quad \hat{\epsilon}_q(K_i) = 1,\\
E_i^* = F_i K_i,\quad
F_i^* = K_i^{-1} E_i, \quad
K_i^* = K_i.
\end{gather*}

A representation $(\pi, V)$ of $\Uni_q(\liealg{g})$ is said to be \textit{admissible} when $V$ admits a decomposition $\oplus_{\chi \in P} V_\chi$ such that $\pi(K_i)|_{V_\chi}$ is equal to the scalar $q^{(\alpha_i, \chi)}$. The category of finite dimensional admissible $*$-representations of $\Uni_q(\liealg{g})$ is a C$^*$-tensor category with the forgetful functor.  We denote the associated compact quantum group by $G_q$.  There is a natural inclusion of $T$ into $\Uni(G_q)$.   Then the set  $Z(G_q)$ of group-like central elements in $\Uni(G_q)$ coincides with $Z(G)$.  The class of representations of~$G_q$ on which $Z(G)$ acts trivially corresponds to a quotient quantum group denoted by~$G_q/Z(G)$.

\bigskip

\section{Twisted \texorpdfstring{$q$}{q}-deformations}
\label{sec:twist-repn}

\subsection{Extension of the QUE-algebra}
\label{sec:twisting-que}

For $q>0$, we let $\EUni_q(\liealg{g})$ denote the universal $*$-algebra generated by $\Uni_q(\liealg{g})$ and unitary central elements $C_1, \ldots, C_r$. It is not difficult to check that for $q\ne1$ the following formulas define a Hopf $*$-algebra structure on $\EUni_q(\liealg{g})$:
\begin{align*}
\hat{\Delta}(E_i) &= E_i \otimes C_i + K_i \otimes E_i,&
\hat{\Delta}(K_i) &= K_i \otimes K_i,&
\hat{\Delta}(C_i) &= C_i \otimes C_i.
\end{align*}
Similarly, for $q = 1$, we define
\begin{align*}
\hat{\Delta}(E_i) &= E_i \otimes C_i + 1 \otimes E_i,&
\hat{\Delta}(H_i) &= H_i \otimes 1 + 1 \otimes H_i,&
\hat{\Delta}(C_i) &= C_i \otimes C_i.
\end{align*}

There is a Hopf $*$-algebra homomorphism from $\EUni_q(\liealg{g})$ onto $\Uni_q(\liealg{g})$, defined by $C_i \mapsto 1$ and by the identity map on the copy of $\Uni_q(\liealg{g})$.  There is also a Hopf $*$-algebra homomorphism onto $\C[(C_i)_{i=1}^r]$, given by $E_i \mapsto 0$, $F_i \mapsto 0$, $K_i \mapsto 1$, and by the identity map on the $C_i$'s.  We regard representations of~$\Uni_q(\liealg{g})$ and of $\C[(C_i)_{i = 1}^r]$ as the ones of $\EUni_q(\liealg{g})$ via these homomorphisms.

\begin{remk}
The Hopf algebra $\EUni_q(\liealg{g})$ is closely related to the Drinfeld double $\mathcal{D}(\Uni_q(\liealg{b}_+))$ of $\Uni_q(\liealg{b}_+) = \langle E_i, K_i \mid 1 \le i \le r\rangle$. Namely, put
 \[
   X_i^+ = E_i C_i^{-1}, \quad K^+_i = K_i C_i^{-1}, \quad X_i^- = F_i, \quad K^-_i = K_i C_i.
 \]
Then we see that the elements $X_i^+$ and $K^+_i$ generate a copy of $\Uni_q(\liealg{b}_+)$, while the $X_i^-$ and $K_i^-$ generate a copy of $\Uni_q(\liealg{b}_-)$, and taking together these subalgebras give a copy of $\mathcal{D}(\Uni_q(\liealg{b}_+))$ in $\EUni_q(\liealg{g})$.The homomorphism $\EUni_q(\liealg{g}) \rightarrow \Uni_q(\liealg{g})$ is an extension of the standard projection $\mathcal{D}(\Uni_q(\liealg{b}_+)) \rightarrow \Uni_q(\liealg{g})$.  If we add square roots of $K^\pm_i$ to $\mathcal{D}(\Uni_q(\liealg{b}_+))$, thus getting a Hopf algebra $\widetilde{\mathcal{D}(\Uni_q(\liealg{b}_+))}$, we can recover $\EUni_q(\liealg{g})$ by letting $C_i=(K^-_i)^{1/2}(K^+_i)^{-1/2}$. Therefore we have inclusions of Hopf algebras $\mathcal{D}(\Uni_q(\liealg{b}_+))\subset \EUni_q(\liealg{g})\subset \widetilde{\mathcal{D}(\Uni_q(\liealg{b}_+))}$.
\end{remk}

Let $\tau = (\tau_1, \ldots, \tau_r)$ be an $r$-tuple of elements in $Z(G)$. We say that a representation $(\pi,V)$ of $\EUni_q(\liealg{g})$ is {\em $\tau$-admissible} if its restriction to $\Uni_q(\liealg{g})$ is admissible and the elements $C_i$ act on the weight spaces~$V_\chi$ as scalars $\pairing{\tau_i}{\chi}$. The category of $\tau$-admissible representations is a rigid C$^*$-tensor category with forgetful functor.  Moreover, the $G_q/Z(G)$-representations are naturally included in the $\tau$-admissible representations as a C$^*$-tensor subcategory.

\begin{defn}
We let $G_q^\tau$ denote the compact quantum group realizing the category of finite dimensional $\tau$-admissible $*$-representations of $\EUni_q(\liealg{g})$ together with its canonical fiber functor.
\end{defn}

In other words, $\C[G^\tau_q]\subset\EUni_q(\liealg{g})^*$ is spanned by matrix coefficients of finite dimensional $\tau$-admissible representations, and the Hopf $*$-algebra structure on $\C[G^\tau_q]$ is defined by duality using that of $\EUni_q(\liealg{g})$.

Since every admissible representation of  $\Uni_q(\liealg{g})$ extends uniquely to a $\tau$-admissible representation of $\EUni_q(\liealg{g})$, and every $\tau$-admissible representation is obtained this way, we can identify the $*$-algebra $\Uni(G^\tau_q)$ with $\Uni(G_q)$. The image $\Uni^\tau_q(\liealg{g})$ of $\EUni_q(\liealg{g})$ in $\Uni(G^\tau_q)=\Uni(G_q)$ plays the role of a quantized universal enveloping algebra for $G^\tau_q$. As an algebra it is generated by $E_i$, $F_i$, $K_i^{\pm1}$ and $\tau_i$ (which is the image of $C_i$), but is endowed with a modified coproduct
\begin{align}\label{eq:twist-coprod-UGqtau}
\hat{\Delta}(E_i) &= E_i \otimes \tau_i + K_i \otimes E_i,&
\hat{\Delta}(K_i) &= K_i \otimes K_i,&
\hat{\Delta}(\tau_i) &= \tau_i \otimes \tau_i.
\end{align}
To put it differently,  as a $*$-algebra, $\Uni^\tau_q(\liealg{g})$ is the tensor product of $\Uni_q(\liealg{g})$ and the group algebra of the group $T_\tau \subset Z(G)$ generated by $\tau_1,\dots,\tau_r$, while the coproduct is defined by~\eqref{eq:twist-coprod-UGqtau}. As a quotient of $\EUni_q(\liealg{g})$, the Hopf $*$-algebra $\Uni^\tau_q(\liealg{g})$ is obtained by requiring that the unitaries $C_1,\dots,C_r$ satisfy the same relations as $\tau_1,\dots,\tau_r\in Z(G)$.

\subsection{Twisting and associator}
\label{sec:twisting-associator}

Given $\tau = (\tau_1, \ldots, \tau_r) \in Z(G)^r$, we obtain a $3$-cocycle on $\widehat{Z(G)}=P/Q$ as follows.

First, let $f(\lambda, \mu)$ be a $\T$-valued function on $P \times P$ satisfying
\begin{align}\label{eq:f-from-tau}
  f(\lambda, \mu + Q) &= f(\lambda, \mu),&
  f(\lambda + \alpha_i, \mu) &= \pairing{\tau_i}{\mu} f(\lambda, \mu).
\end{align}
These conditions imply that $f$ can be determined by its restriction to the image of a set-theoretic section $(P/Q)^2 \rightarrow P^2$. For example, if $\lambda_1, \ldots, \lambda_n$  is a system of representatives of $P/Q$, then we can put
\[
f\Big(\lambda_i + \sum_{j = 1}^r m_j \alpha_j, \mu\Big) = \prod_{j=1}^r \pairing{\tau_j}{\mu}^{m_j}
\]
for all $1 \le i \le n$ and $(m_1, \ldots, m_r) \in \Z^r$.

Using~\eqref{eq:f-from-tau}, the coboundary of $f$,
\[
(\partial f)(\lambda, \mu, \nu) = f(\mu, \nu) f(\lambda + \mu, \nu)^{-1} f(\lambda, \mu + \nu) f(\lambda, \mu)^{-1},
\]
is seen to be invariant under the translation by $Q$ in each variable.  Thus, $\partial f$ can be considered as a $3$-cochain on $P/Q$ with values in $\T$.  By construction, it is a cocycle. If $f'$ satisfies the same condition as $f$ above, the difference $f'f^{-1}$ is $Q^2$-invariant, that is, it defines a function on $(P/Q)^2$.  Thus, the cohomology class of $\partial f$ in $H^3(P/Q; \T)$ depends only on $\tau$. It also follows that the twisted coproduct $\hat{\Delta}_f(x) = f \hat{\Delta}_q(x) f^*$ does not depend on the choice of $f$.

Since $(\partial f)^*$ belongs to $\Uni(Z(G)^3)$, as we discussed in Section~\ref{sec:twist-group-cocycl}, it can be regarded as an invariant $3$-cocycle in $\Uni(G_q^3)$ which is denoted by $\Phi^\tau$. Similarly, $f$ can be considered as a unitary in $\Uni(G_q^2)$, and we have
$$
\Phi^\tau = (\iota \otimes \hat{\Delta}_q)(f^*) (1 \otimes f^*) (f \otimes 1) (\hat{\Delta}_q \otimes \iota)(f).
$$

\begin{prop}
The coproduct $\Dhat_f$ on $\Uni(G_q)$ coincides with the coproduct $\Dhat$ defined by \eqref{eq:twist-coprod-UGqtau}. \end{prop}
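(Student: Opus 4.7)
The plan is to verify the equality on the generators $E_i$, $F_i$, $K_i$, and $\tau_i$ of the subalgebra $\Uni^\tau_q(\liealg{g}) \subset \Uni(G_q)$, which suffices since both $\hat\Delta_f$ and the coproduct $\hat\Delta$ from \eqref{eq:twist-coprod-UGqtau} are algebra homomorphisms into $\Uni(G_q^2)$. I will test each generator against an arbitrary pair of weight vectors $\xi_\lambda \otimes \xi_\mu$ in an admissible representation, using that by construction $f$ acts on such a vector as the scalar $f(\lambda, \mu)$.

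For the diagonal generators $K_i$ and $\tau_i$, the elements $K_i \otimes K_i$ and $\tau_i \otimes \tau_i$ are diagonal in the weight basis and therefore commute with $f$, so $\hat\Delta_f(K_i) = K_i \otimes K_i$ and $\hat\Delta_f(\tau_i) = \tau_i \otimes \tau_i$ immediately, matching \eqref{eq:twist-coprod-UGqtau}. For $E_i$, I split $\hat\Delta_q(E_i) = E_i \otimes 1 + K_i \otimes E_i$ and compute each summand. The operator $E_i \otimes 1$ shifts the weight pair from $(\lambda, \mu)$ to $(\lambda + \alpha_i, \mu)$, so conjugation by $f$ introduces the scalar $f(\lambda + \alpha_i, \mu)/f(\lambda, \mu) = \pairing{\tau_i}{\mu}$ by the second clause of \eqref{eq:f-from-tau}; this is precisely the scalar by which $1 \otimes \tau_i$ acts, so this summand becomes $E_i \otimes \tau_i$. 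Similarly, $K_i \otimes E_i$ shifts the weight pair to $(\lambda, \mu + \alpha_i)$, and the corresponding ratio $f(\lambda, \mu + \alpha_i)/f(\lambda, \mu) = 1$ because $\alpha_i \in Q$ and $f$ is $Q$-invariant in the second slot. Summing yields $E_i \otimes \tau_i + K_i \otimes E_i$, as required. The case of $F_i$ is analogous, with weight shifts $-\alpha_i$ replacing $+\alpha_i$; alternatively it follows from the $E_i$ case together with the fact that both $\hat\Delta_f$ and $\hat\Delta$ are $*$-homomorphisms and $E_i^* = F_iK_i$.

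No step presents a genuine obstacle. The key observation is that the two normalization conditions in \eqref{eq:f-from-tau} were arranged precisely to produce the scalars $\pairing{\tau_i}{\mu}$ (from translation of the first argument by $\alpha_i$) and $1$ (from translation of the second argument by $Q$) needed to match \eqref{eq:twist-coprod-UGqtau} term by term.
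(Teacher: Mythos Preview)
Your proposal is correct and follows essentially the same approach as the paper: both arguments verify the equality on generators by computing how conjugation by $f$ affects each summand of $\hat\Delta_q(E_i)$, using that $E_i$ shifts weights by $\alpha_i$ together with the two defining conditions in \eqref{eq:f-from-tau}. Your version is slightly more explicit (you spell out the $F_i$ and $\tau_i$ cases, which the paper omits as immediate), but the core reasoning is identical.
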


\begin{proof}
Since $f$ is contained in $\Uni(T^2)\subset\Uni(G_q^2)$, $\hat{\Delta}_f=\Dhat_q$ on the elements $K_i$.  For $E_i$, since the action of $E_i$ on an admissible module increases the weight of a vector by $\alpha_i$, identities  \eqref{eq:f-from-tau} imply that $f (K_i \otimes E_i) f^* = K_i \otimes E_i$ and $f (E_i \otimes 1) f^* = E_i \otimes \tau_i$.  Comparing these identities with~\eqref{eq:twist-coprod-UGqtau}, we obtain the assertion.
\end{proof}

\begin{cor}\label{cor:cat-equiv-gen-form}
The representation category of $G_q^\tau$ is unitarily monoidally equivalent to $\Rep(G_q)^{\Phi^\tau}$, the representation category of $G_q$ with associativity morphisms defined by $\Phi^\tau$.
\end{cor}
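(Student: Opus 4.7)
The plan is to deduce the corollary from the preceding proposition by invoking the general twisting framework recalled in Section~\ref{sec:twist-group-cocycl}. Concretely, I want to match up three categories: $\Rep(G_q^\tau)$ defined via $\tau$-admissible representations of $\EUni_q(\liealg{g})$; the category $\Rep((G_q)_f)$ obtained by twisting $G_q$ by the unitary $f\in \Uni(G_q^2)$; and the target $\Rep(G_q)^{\Phi^\tau}$.

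First, I would note that by construction of $\Phi^\tau$, the cohomological equation
\[
\Phi^\tau = (\iota \otimes \hat{\Delta}_q)(f^*) (1 \otimes f^*) (f \otimes 1) (\hat{\Delta}_q \otimes \iota)(f)
\]
is satisfied (this is recorded just before the proposition). Thus the general discussion of Section~\ref{sec:twist-group-cocycl} applies: the unitary $f$ realizes $\Phi^\tau$ as a coboundary, and consequently the quantum group $(G_q)_f$ obtained by replacing $\hat\Delta_q$ with $\hat\Delta_f(x)=f\hat\Delta_q(x)f^*$ satisfies $\Rep((G_q)_f)\simeq \Rep(G_q)^{\Phi^\tau}$ as unitary C$^*$-tensor categories, with monoidal equivalence implemented by the identity on objects and morphisms and the tensor transformation $\xi\otimes\eta\mapsto f^*(\xi\otimes\eta)$.

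It remains to identify $(G_q)_f$ with $G_q^\tau$. As observed in Section~\ref{sec:twisting-que}, the $*$-algebra $\Uni(G_q^\tau)$ is canonically identified with $\Uni(G_q)$, via the unique extension of each admissible representation of $\Uni_q(\liealg{g})$ to a $\tau$-admissible representation of $\EUni_q(\liealg{g})$; on generators of $\Uni^\tau_q(\liealg{g})$ the coproduct of $\EUni_q(\liealg{g})$ becomes exactly~\eqref{eq:twist-coprod-UGqtau}. But the preceding proposition asserts precisely that $\hat\Delta_f$ on $\Uni(G_q)$ coincides with this coproduct on $K_i$, $E_i$ (and by the analogous computation on $F_i$, or by using the $*$-structure). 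Since both coproducts agree on a generating set of $\Uni^\tau_q(\liealg{g})$ and both are $*$-algebra homomorphisms, they agree on $\Uni^\tau_q(\liealg{g})$, hence on the dense image of $\EUni_q(\liealg{g})$ in $\Uni(G_q)$, and therefore on all of $\Uni(G_q)$.

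Consequently the discrete quantum group structures on $\Uni(G_q)$ underlying $G_q^\tau$ and $(G_q)_f$ coincide, so $\Rep(G_q^\tau)=\Rep((G_q)_f)$ as C$^*$-tensor categories with fiber functor, and the corollary follows by composing this identification with the monoidal equivalence $\Rep((G_q)_f)\simeq \Rep(G_q)^{\Phi^\tau}$ above. The only mildly delicate point is keeping track of the identification $\Uni(G_q^\tau)=\Uni(G_q)$ and checking that the twisted coproduct $\hat\Delta_f$ really reproduces the coproduct from $\EUni_q(\liealg{g})$, but this is exactly the content of the previous proposition, so no further work is required.
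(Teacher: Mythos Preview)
Your proposal is correct and is exactly the argument the paper intends: the corollary is stated without proof because it follows immediately from the preceding proposition (identifying $\hat\Delta_f$ with the coproduct~\eqref{eq:twist-coprod-UGqtau} defining $G_q^\tau$) together with the general twisting framework of Section~\ref{sec:twist-group-cocycl}, which gives the unitary monoidal equivalence $\Rep((G_q)_f)\simeq \Rep(G_q)^{\Phi^\tau}$. You have simply spelled out the two steps explicitly.
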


This result can also be interpreted as follows. Let $\Phi_{\KZ, q}\in\Uni(G^3)$ be the Drinfeld associator coming from the Knizhnik--Zamolodchikov equations associated with the parameter $\hbar = \log (q) / \pi i$. The representation category of $G_q$ is equivalent to that of $G$ with associativity morphisms defined by $\Phi_{\KZ, q}$.  The equivalence is given by a unitary Drinfeld twist $F_D \in \Uni(G^2)$ satisfying~\eqref{eq:Phi-cobdry-of-F} for $\Phi_{\KZ, q}$~\cite{neshveyev-tuset-book}*{Chapter~4}. It follows that $\Rep(G^\tau_q)$ is unitarily monoidally equivalent to the category $\Rep(G)$ with associativity morphisms defined by
$$
\Phi^\tau_{\KZ,q}=(\iota \otimes \hat{\Delta})(F_D^*) (1 \otimes F_D^*) \Phi^\tau(F_D \otimes 1) (\hat{\Delta} \otimes \iota)(F_D)=\Phi^\tau\Phi_{\KZ,q},
$$
where we now consider $\Phi^\tau$ as an element of $\Uni(G^3)$. Correspondingly, the unitary $F^\tau_D=fF_D\in\Uni(G^2)$ plays the role of a Drinfeld twist for $G^\tau_q$.

\begin{remk}\label{remk:isospec-deform-Dirac}
The construction of~\cite{MR2643923} can be carried out for $G_q^\tau$ to obtain a spectral triple over $\C[G_q^\tau]$ as an isospectral deformation of the spin Dirac operator on $G$.  Indeed, it is enough to verify the boundedness of $[1 \otimes (\iota \otimes \gamma)(t), (\pi \otimes \iota \otimes \widetilde{\ad})(\Phi_{\KZ,q}^\tau)]$ for any irreducible representation~$\pi$, where $t$ is the standard symmetric tensor $\sum_i x_i \otimes x_i$~\cite{MR2643923}*{Corollary~3.2}.  Since $(\pi \otimes \iota \otimes \widetilde{\ad})(\Phi^\tau) \in \C \otimes \Uni(Z(G)) \otimes \C$ commutes with $1 \otimes (\iota \otimes \gamma)(t)$, we can reduce the proof to the case of trivial $\tau$.
\end{remk}

A natural question is how large the class of cocycles of the form $\Phi^\tau$ is. These cocycles are analyzed in detail in Appendix. Using that analysis we point out the following.

\begin{prop}
A $\T$-valued $3$-cocycle $\Phi$ on $P/Q$ is cohomologous to $\Phi^\tau$ for some $\tau_1,\dots,\tau_r\in Z(G)$ if and only if $\Phi$ lifts to a coboundary on $P$. This is always the case if $P/Q$ can be generated by not more than two elements. For example, this is the case if $G$ is simple.
\end{prop}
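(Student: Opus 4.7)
My plan is as follows. The forward implication will be immediate: by construction, $\Phi^\tau$ pulls back to the coboundary $\partial f$ on $P$ for $f$ any $2$-cochain satisfying \eqref{eq:f-from-tau}, so any $\Phi$ cohomologous to $\Phi^\tau$ on $P/Q$ also pulls back to a coboundary on $P$.

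For the reverse implication, I would start with a $2$-cochain $\tilde f$ on $P$ whose coboundary is $\Phi|_P$, and try to modify it---by multiplying by a $2$-cocycle on $P$ (which does not change $\partial \tilde f$), and by the pullback of $\partial h$ for a $2$-cochain $h$ on $P/Q$ (which changes $\Phi$ only by a coboundary on $P/Q$)---into a cochain of the shape \eqref{eq:f-from-tau}. To locate what has to be verified, for each simple root $\alpha_i$ I would introduce $\chi_i(\lambda, \mu) := \tilde f(\lambda + \alpha_i, \mu)/\tilde f(\lambda, \mu)$; the $Q$-invariance of $\partial \tilde f$ under the shift $\lambda \mapsto \lambda + \alpha_i$ gives the functional equation $\chi_i(\lambda, \mu + \nu) = \chi_i(\lambda + \mu, \nu) \chi_i(\lambda, \mu)$, which forces $\chi_i(\lambda, \mu) = h_i(\lambda + \mu)/h_i(\lambda)$ with $h_i(\mu) := \chi_i(0, \mu)$. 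The shape \eqref{eq:f-from-tau} then amounts to requiring that each $h_i$ be a character of $P$ factoring through $P/Q$; the corresponding character of $P/Q$ provides the desired $\tau_i \in Z(G)$. The hard part will be to track the obstruction to making all the $h_i$ simultaneously characters under the allowed modifications; the cohomological computation (based on the Lyndon--Hochschild--Serre spectral sequence for $0 \to Q \to P \to P/Q \to 0$) is precisely what the Appendix deals with.

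For the final statement, suppose $A := P/Q$ is generated by at most two elements, and fix a surjection $p\colon \Z^2 \twoheadrightarrow A$. Since $P$ is free abelian, hence projective in the category of abelian groups, the surjection $P \twoheadrightarrow A$ lifts to a homomorphism $\varphi\colon P \to \Z^2$ with $p\circ\varphi$ equal to the original surjection. Consequently the inflation map $H^3(A;\T) \to H^3(P;\T)$ factors through $H^3(\Z^2;\T)$, which vanishes by the K\"unneth formula (using that $H^*(\Z;\T)$ is concentrated in degrees $0$ and $1$), so every $3$-cocycle $\Phi$ on $P/Q$ lifts to a coboundary on $P$. For $G$ simple, $\widehat{Z(G)} = P/Q$ is cyclic in all Dynkin types except $D_n$, where it is $\Z/4$ (for $n$ odd) or $(\Z/2)^2$ (for $n$ even); in every case it is at most $2$-generated, so the hypothesis applies.
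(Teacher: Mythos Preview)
Your forward implication and your treatment of the simple case are correct and match the paper. Your argument for the ``at most two generators'' case is correct and is a clean alternative to the paper's: the paper instead passes through the equivalent condition that $\Phi$ vanish on $\bigwedge^3(P/Q)\subset H_3(P/Q)$, and notes that $\bigwedge^3(P/Q)=0$ when $P/Q$ is $2$-generated. Your version (factoring inflation through $H^3(\Z^2;\T)=0$) is the Pontryagin-dual of that observation.

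The reverse implication, however, is where your plan has a genuine gap. You correctly derive the functional equation for $\chi_i$ and its consequence $\chi_i(\lambda,\mu)=h_i(\lambda+\mu)/h_i(\lambda)$, but you never establish that the allowed modifications (by $2$-cocycles on $P$ and by pullbacks of $2$-cochains on $P/Q$) suffice to make each $h_i$ a character of $P/Q$; you simply assert that ``the hard part'' is handled in the Appendix via the Lyndon--Hochschild--Serre spectral sequence. That is not what the Appendix does. The paper's proof of the equivalence (Corollary~\ref{ccmain}) rests on an explicit K\"unneth-type computation of $H_3(\Gamma)$ for finite abelian $\Gamma$ (Proposition~\ref{p3cocycles}): one writes down explicit cocycle generators $\phi_i,\phi_{ij},\phi_{ijk}$ of $H^3(\Gamma;\T)$, checks directly that $\phi_i$ and $\phi_{ij}$ are coboundaries $\partial f$ of cochains $f$ of the shape~\eqref{eq:f-from-tau}, and identifies the $\phi_{ijk}$ as the $3$-characters pairing with $\bigwedge^3\Gamma$. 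The equivalence then drops out. Your outlined cochain-massaging approach may well be completable, but as written it is only a promissory note; you also do not address the second condition in~\eqref{eq:f-from-tau}, the $Q$-invariance of $f$ in the \emph{second} variable, which your modification must simultaneously achieve.
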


\bp The first statement is proved in Corollary~\ref{ccmain}. It is also shown there that another equivalent condition on $\Phi$ is that it vanishes on $\bigwedge^3(P/Q)\subset H_3(P/Q;\Z)$. This condition is obviously satisfied if $P/Q$ can be generated by two elements. Finally, if $G$ is simple, then it is known that $P/Q$ is cyclic in all cases except for $G=\Spin(4n)$, in which case $P/Q\cong\Z/2\Z\oplus\Z/2\Z$.
\ep

Therefore for simple $G$ the quantum groups $G^\tau_q$ realize all possible associativity  morphisms on $\Rep(G_q)$ defined by $3$-cocycles on the dual of the center. In the semisimple case this is not true as soon as the center becomes slightly more complicated, namely, as soon as $\bigwedge^3(P/Q)\ne0$. We conjecture that in this case, if we take a cocycle $\Phi$ on $P/Q$ that does not lift to a coboundary on $P$, then there are no unitary fiber functors on $\Rep(G)^\Phi$, that is, there are no compact quantum groups with this representation category. Note that by Corollary~\ref{ccmain2} any such cocycle $\Phi$ is cohomologous to product of a cocycle $\Phi^\tau$ and a $3$-character on $P/Q$ that is nontrivial on $\bigwedge^3(P/Q)\subset(P/Q)^{\otimes3}$.

\subsection{Isomorphisms of twisted quantum groups}
\label{sec:isomorphisms}

Denote the cohomology class of the cocycle~$\Phi^\tau$ in $H^3(P/Q;\T)$ by $\Theta(\tau)$. This way we obtain a homomorphism
\[
\Theta\colon Z(G)^r \rightarrow H^3(P/Q; \T).
\]
Assume $\tau\in\ker\Theta$. Let $f$ be a function satisfying \eqref{eq:f-from-tau}. Then there exists a $2$-cochain $g\colon (P/Q)^2\to\T$ such that $\partial f=\partial g$, so that $fg^{-1}$ is a $2$-cocycle on $P$. Another choice of $f$ and $g$ would give us a cocycle that differs from $fg^{-1}$ by a $2$-cocycle on $P/Q$. Therefore taking the cohomology class of~$fg^{-1}$ we get a well-defined homomorphism
$$
\Upsilon\colon\ker\Theta\to H^2(P;\T)/H^2(P/Q;\T).
$$

\begin{prop}\label{piso}
Assume $\tau',\tau\in Z(G)^r$ are such that
$$
\tau'\tau^{-1}\in\ker\Theta \ \ \text{and}\ \ \tau'\tau^{-1}\in\ker\Upsilon.
$$
Then the quantum groups $G_q^{\tau'}$ and $G_q^\tau$ are isomorphic.
\end{prop}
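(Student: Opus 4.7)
The plan is to realise the desired isomorphism as an inner automorphism of the common underlying $*$-algebra $\Uni(G_q^\tau)=\Uni(G_q)=\Uni(G_q^{\tau'})$, implemented by a well-chosen unitary in $\Uni(T)$. First I fix $\T$-valued $2$-cochains $f, f'$ on $P^2$ satisfying~\eqref{eq:f-from-tau} for $\tau$ and $\tau'$ respectively, so that the twisted coproducts $\hat\Delta_f$ and $\hat\Delta_{f'}$ realise $\Uni(G_q^\tau)$ and $\Uni(G_q^{\tau'})$. The ratio $k := f'f^{-1}$ then satisfies~\eqref{eq:f-from-tau} for $\tilde\tau := \tau'\tau^{-1}$ and so is a legitimate cochain for computing $\Phi^{\tilde\tau}$.

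Now unpack the hypotheses. Because $\tilde\tau \in \ker\Theta$, there exists a $2$-cochain $g$ on $P/Q$ with $\partial g = \partial k$, making $kg^{-1}$ a $2$-cocycle on $P$. Because $\tilde\tau \in \ker\Upsilon$, the class of $kg^{-1}$ in $H^2(P;\T)/H^2(P/Q;\T)$ vanishes; absorbing a representative $2$-cocycle on $P/Q$ into $g$, I may assume $kg^{-1} = \partial h$ for some $\T$-valued $1$-cochain $h$ on $P$. Thus
\[
f' \;=\; g \cdot \partial h \cdot f, \qquad g \in \Uni(Z(G)^2), \quad h \in \Uni(T).
\]

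Viewing $h$ as a unitary element of $\Uni(T)$, the coboundary has the standard expression $\partial h = (h \otimes h)\hat\Delta_q(h)^{-1}$ in $\Uni(T^2)$. Since $Z(G_q) = Z(G)$ is central in $\Uni(G_q)$, the factor $g$ is central in $\Uni(G_q^2)$ and drops out of the conjugation $f' \hat\Delta_q(x) f'^{*}$. Using also that $f$ and $h\otimes h$ both lie in the commutative algebra $\Uni(T^2)$ and therefore commute, a direct computation yields
\[
\hat\Delta_{f'}(x) \;=\; (h \otimes h)\, \hat\Delta_f(h^{-1} x h)\, (h \otimes h)^{-1}.
\]

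Finally, define the $*$-algebra automorphism $\phi \colon \Uni(G_q) \to \Uni(G_q)$ by $\phi(x) = h^{-1} x h$; it is a genuine $*$-map because $h$ is unitary. The displayed identity reads precisely $(\phi \otimes \phi) \circ \hat\Delta_{f'} = \hat\Delta_f \circ \phi$, so $\phi$ is a Hopf $*$-algebra isomorphism $\Uni(G_q^{\tau'}) \to \Uni(G_q^\tau)$; dually one gets an isomorphism $\C[G_q^\tau] \cong \C[G_q^{\tau'}]$ which extends to the C$^*$-completions, yielding $G_q^\tau \cong G_q^{\tau'}$. I do not anticipate a genuine obstacle here: the only subtlety is carefully tracking the passage between $\T$-valued cochains on $P$ (or $P/Q$) and elements of $\Uni(T^2)$, and exploiting both the commutativity of $\Uni(T^2)$ and the centrality of $\Uni(Z(G)^2)$ in $\Uni(G_q^2)$ to commute the factors of $f'$ into the position needed to recognise $\hat\Delta_f$ on the right hand side.
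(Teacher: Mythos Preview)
Your proof is correct and follows essentially the same approach as the paper's: both pick $f,f'$ satisfying~\eqref{eq:f-from-tau}, use the two hypotheses to write $f'f^{-1}$ as a $P/Q$-cochain $g$ times a coboundary $\partial h$ on $P$, and then observe that conjugation by the unitary $h\in\Uni(T)$ intertwines the twisted coproducts because $g$ is central in $\Uni(G_q^2)$ and all the remaining factors lie in the commutative $\Uni(T^2)$. Your write-up simply spells out the computation $\hat\Delta_{f'}(x)=(h\otimes h)\hat\Delta_f(h^{-1}xh)(h\otimes h)^{-1}$ in more detail than the paper does.
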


\bp Denote by $\Dhat'$ and $\Dhat$ the coproducts on $\Uni(G_q)$ defined by $\tau'$ and $\tau$, see~\eqref{eq:twist-coprod-UGqtau}. Let $f'$ and $f$ be functions satisfying \eqref{eq:f-from-tau} for $\tau'$ and $\tau$, respectively, so that $\Dhat'=\Dhat_{f'}$ and $\Dhat=\Dhat_f$. The assumptions of the proposition mean that there exists a $2$-cochain $g$ on $P/Q$ such that $f'f^{-1}g$ is a coboundary on $P$. In other words, there exists a unitary $u\in\Uni(T^2)\subset\Uni(G_q^2)$ such that
$$
f'g=(u\otimes u)f\Dhat_q(u)^*.
$$
Then $\Ad u$ is an isomorphism of $(\Uni(G_q),\Dhat)$ onto $(\Uni(G_q),\Dhat')$, hence $G_q^\tau\cong G_q^{\tau'}$.
\ep

Apart from the isomorphisms given by this proposition, we have $G^\tau_q\cong G^{\tau^{-1}}_{q^{-1}}$. There also are isomorphisms induced by symmetries of the based root datum of $G$. Finally, for $q=1$ there can be additional isomorphisms defined by conjugation by elements in $\Uni(G)$ that lie in the normalizer of the maximal torus.

\bigskip

\section{Function algebras of twisted quantum groups}
\label{sec:funct-algebra-twist}

\subsection{Crossed product description}
\label{sec:comp-quant-group-realiz}

As before, assume $\tau=(\tau_1,\dots,\tau_r)\in Z(G)^r$. Recall that we denote by $T_\tau$ the subgroup of $Z(G)$ generated by the elements $\tau_1,\dots,\tau_r$.
There is a homomorphism $$\psi\colon \hat{T}_\tau\to T/Z(G)$$ defined as follows.  Given $\chi \in \hat{T}_\tau$, we define a character on the root lattice $Q$ by $\alpha_i \mapsto \chi(\tau_i)$. It can be extended to $P$, and we obtain an element $\tilde{\psi}(\chi) \in \hat{P} = T$.  The ambiguity of this extension is in $Q^\perp \cap T =Z(G)$.  Thus, the image $\psi(\chi)$ of $\tilde{\psi}(\chi)$ in $T/Z(G)$ is well-defined.

The homomorphism $\psi$ allows us to define an action of $\hat T_\tau$ by conjugation on $G_q$, that is, we have an action $\Ad\psi$ of $\hat T_\tau$ on $C(G_q)$ defined by
$$
(\Ad \psi(\chi))(a)={\pairing{\tilde\psi(\chi^{-1})}{a_{[1]}}} \pairing{\tilde\psi(\chi)}{a_{[3]}}a_{[2]};
$$
recall that the elements of $T$ define characters of $C(G_q)$, that is, they are group-like unitary elements in~$\Uni(G_q)$.

\begin{thm}\label{thm:fixed-pt-in-crossed-prod}
There is a canonical isomorphism
  \[
  C(G_q^\tau) \cong (C(G_q) \rtimes_{\Ad \psi} \hat{T}_\tau)^{T_\tau},
  \]
where the group $T_\tau$ acts on $C(G_q) \rtimes_{\Ad \psi} \hat{T}_\tau$ by right translations $\rho$ on $C(G_q)$ and by the dual action on~$C^*(\hat{T}_\tau)$.
\end{thm}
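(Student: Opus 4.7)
My plan is to construct an explicit $*$-isomorphism on dense polynomial subalgebras and then extend by continuity to the C$^*$-level. The starting point, extracted from Sections~\ref{sec:twisting-que}--\ref{sec:twisting-associator}, is that $\Uni(G_q^\tau) = \Uni(G_q)$ as $*$-algebras and that $\Dhat = f \Dhat_q f^*$ on $\Uni(G_q)$, so $\C[G_q^\tau] = \C[G_q]$ as vector spaces (via $\tilde c^V_{\mu,\nu} = c^V_{\mu,\nu}$) and the twisted product dualizes to
\[
\tilde c^V_{\mu,\nu} \cdot_\tau \tilde c^W_{\mu',\nu'} = f(\mu,\mu')\, f(\nu,\nu')^{-1}\, c^V_{\mu,\nu}\, c^W_{\mu',\nu'}.
\]
On the crossed-product side, the right $T_\tau$-action satisfies $\rho_t(c^V_{\mu,\nu}) = \pairing{t}{\nu}\, c^V_{\mu,\nu}$, so the fixed-point subalgebra $(\C[G_q] \rtimes \hat T_\tau)^{T_\tau}$ is spanned by elements $c^V_{\mu,\nu} \otimes \lambda_{\chi(\nu)^{-1}}$, where $\chi(\nu) \in \hat T_\tau$ is the character $t \mapsto \pairing{t}{\nu}$.

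I would then define $\Phi(\tilde c^V_{\mu,\nu}) = \alpha^{[V]}(\mu,\nu)\, c^V_{\mu,\nu} \otimes \lambda_{\chi(\nu)^{-1}}$ with scalars $\alpha^{[V]}(\mu,\nu) \in \T$ depending only on $(\mu,\nu)$ and the class $[V] \in P/Q$. Using the commutation rule $\lambda_\chi a = (\Ad\psi(\chi))(a)\, \lambda_\chi$ together with the identity $(\Ad\psi(\chi))(c^V_{\mu,\nu}) = \pairing{\tilde\psi(\chi)}{\nu-\mu}\, c^V_{\mu,\nu}$, the multiplicativity condition $\Phi(a \cdot_\tau b) = \Phi(a)\Phi(b)$ reduces to the cocycle-type identity
\[
f(\mu,\mu')\, f(\nu,\nu')^{-1}\, \alpha^{[V]+[W]}(\mu+\mu',\nu+\nu') = \alpha^{[V]}(\mu,\nu)\,\alpha^{[W]}(\mu',\nu')\,\pairing{\tilde\psi(\chi(\nu)^{-1})}{\nu'-\mu'}.
\]
I expect a consistent solution to exist because both $f$ and $\tilde\psi$ are built from the same datum $\tau$: using~\eqref{eq:f-from-tau} and the implied $Q$-invariances, the ratio of the two sides reduces to data purely on $P/Q$, and $\alpha$ can be constructed inductively over a set-theoretic section $P/Q \to P$. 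Compatibility with the involutions is then checked on generators.

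Injectivity of $\Phi$ is immediate from linear independence of matrix coefficients, and surjectivity onto a dense subalgebra follows from the spanning description above. For the extension to the C$^*$-level I would use that $C(G_q^\tau)$ is the C$^*$-envelope of $\C[G_q^\tau]$ via coamenability (Banica's theorem, as invoked in Section~\ref{sec:comp-quant-groups}), while $(C(G_q)\rtimes \hat T_\tau)^{T_\tau}$ is a finite-group fixed-point subalgebra of the enveloping C$^*$-algebra $C(G_q)\rtimes \hat T_\tau$ and is therefore recovered from its polynomial part. The main obstacle I anticipate is the construction of the scalars $\alpha^{[V]}(\mu,\nu)$: this should hinge on the fact that $\partial f = \Phi^\tau$ lies in $\Uni(Z(G)^3)$, which says precisely that the obstruction to $\Phi$ being multiplicative without any rescaling vanishes on $Q$-triples, leaving only a class-level mismatch on $P/Q$ that can be absorbed by the $\alpha^{[V]}$'s.
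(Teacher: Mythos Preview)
Your approach is genuinely different from the paper's, and the gap is exactly where you flag it: the construction of the scalars $\alpha^{[V]}(\mu,\nu)$ is the entire content of the argument, and ``I expect a consistent solution to exist'' is not a proof. The functional equation you write down \emph{can} be solved---after using \eqref{eq:f-from-tau} and the definition of $\psi$, the right-hand side becomes a symmetric $\T$-valued $2$-cocycle on the free abelian group $\{(\mu,\nu)\in P^2:\mu\equiv\nu\bmod Q\}$, hence a coboundary by Proposition~\ref{pabelian}---but carrying this out is precisely the work, and your heuristic about obstructions vanishing on $Q$-triples does not replace it. You would then still owe a verification of the $*$-structure (nontrivial, since the involution on $\C[G_q^\tau]$ is not given by a simple formula) and a clean argument that the $*$-isomorphism of dense subalgebras extends to the stated C$^*$-algebras; your appeal to coamenability handles one side but not obviously the fixed-point side.

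The paper sidesteps all of this by never touching the cochain $f$. It introduces an intermediate compact quantum group $\tilde G_q^\tau$ whose representations are \emph{all} finite-dimensional representations of $\Uni_q^\tau(\liealg{g})$ that are admissible on $\Uni_q(\liealg{g})$, not only the $\tau$-admissible ones. Every irreducible such representation is a tensor product $\pi\otimes\chi$ of an admissible $\Uni_q(\liealg{g})$-representation and a character of $T_\tau$, and the single observation that $\pi(\tilde\psi(\chi))$ intertwines $\chi\otimes\pi$ with $\pi\otimes\chi$ immediately yields the crossed-product relation $u_\chi a=(\Ad\psi(\chi))(a)\,u_\chi$, so $\C[\tilde G_q^\tau]=\C[G_q]\rtimes_{\Ad\psi}\hat T_\tau$ as Hopf $*$-algebras. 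The $\tau$-admissibility condition then cuts out exactly the $T_\tau$-fixed part. This produces the isomorphism with no scalars to solve for, makes ``canonical'' evident (nothing depends on a choice of $f$), and carries the $*$- and C$^*$-structures along automatically because the identifications are made at the level of Hopf-$*$-algebra duals of representation categories.
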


\begin{proof}
Let us first identify the compact quantum group $\tilde G_q^\tau$ defined by the category of finite dimensional representations of $\Uni_q^\tau(\liealg{g})$ such that their restrictions to $\Uni_q(\liealg{g})$ are admissible. Any such irreducible representation is tensor product of an irreducible admissible representation of $\Uni_q(\liealg{g})$ and a character of $T_\tau$; recall that these can be regarded as representations of $\Uni^\tau_q(\liealg{g})$. It follows that the Hopf $*$-algebra $\C[\tilde G^\tau_q]$ contains copies of $\C[G_q]$ and $C^*(\hat T_\tau)$, and as a space it is tensor product of these Hopf $*$-subalgebras. It remains to find relations between elements of $\C[G_q]$ and $C^*(\hat T_\tau)$ inside~$\C[\tilde G_q^\tau]$.

Let $(\pi, V)$ be a finite dimensional admissible representation of $\Uni_q(\liealg{g})$, and $\chi$ be a character of $T_\tau$.  Then, on the one hand, $\pi \otimes \chi$ is a representation on $V$ with $E_i$ acting by $\chi(\tau_i) \pi(E_i)$.  On the other hand, $\chi \otimes \pi$ is also a representation on the same space $V$ with $E_i$ acting by $\pi(E_i)$. From this we see that the operator $\pi(\tilde\psi(\chi))$, where we consider the standard extension of $\pi$ to $\Uni(G_q)$, intertwines $\chi\otimes\pi$ with $\pi\otimes\chi$. In other words, if $U_\pi\in B(V)\otimes\C[G_q]$ is the representation of $G_q$ defined by $\pi$, then in $B(V)\otimes\C[\tilde G^\tau_q]$ we have
$$
(\pi(\tilde\psi(\chi))\otimes u_\chi)U_\pi=U_\pi(\pi(\tilde\psi(\chi))\otimes u_\chi).
$$
Since $$(\pi(\tilde\psi(\chi)^{-1})\otimes1)U_\pi(\pi(\tilde\psi(\chi))\otimes1) =(\iota\otimes\Ad\psi(\chi))(U_\pi),$$ this exactly means that if $a\in\C[G_q]$ is a matrix coefficient of $\pi$, then $u_\chi a=(\Ad \psi(\chi))(a)u_\chi$. Therefore $\C[\tilde G^\tau_q]=\C[G_q]\rtimes_{\Ad\psi}\hat T_\tau$.

\smallskip

Now, the quantum group $G^\tau_q$ is the quotient of $\tilde G^\tau_q$ defined by the category of $\tau$-admissible representations. By definition, a representation $\pi\otimes\chi$ of $\Uni^\tau_q(\liealg{g})$ is $\tau$-admissible if $\pi(\tau_i)=\chi(\tau_i)$. Therefore $\C[G_q^\tau]\subset \C[\tilde G^\tau_q]=\C[G_q]\rtimes_{\Ad\psi}\hat T_\tau$ is spanned by elements of the form $au_\chi$, where $a$ is a matrix coefficient of an admissible representation $\pi$ such that $\pi(\tau_i)=\chi(\tau_i)$. If $\pi$ is irreducible, then $\pi(\tau_i)$ is scalar, and we have $\rho(\tau_i)(a)=\pi(\tau_i)a$. Hence
$
\C[G_q^\tau]=(\C[G_q]\rtimes_{\Ad\psi}\hat T_\tau)^{T_\tau}.
$
\end{proof}

\begin{cor}
The C$^*$-algebra $C(G_q^\tau)$ is of type I.
\end{cor}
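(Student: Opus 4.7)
The plan is to chain together three facts, using the crossed product description from Theorem~\ref{thm:fixed-pt-in-crossed-prod} as the starting point. First I would recall that for $q>0$ the C$^*$-algebra $C(G_q)$ of the standard Drinfeld--Jimbo compact quantum group is of type I; this is a well-known result that can be cited from the standard references on compact quantum groups (e.g.\ the Neshveyev--Tuset book cited throughout the paper).

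Next I would observe that $T_\tau$, being a subgroup of the finite abelian group $Z(G)$, is itself finite, and hence so is its Pontryagin dual $\hat T_\tau$. Therefore the crossed product $C(G_q)\rtimes_{\Ad\psi}\hat T_\tau$ is a crossed product of a type I C$^*$-algebra by a finite group, which is again of type I (since taking a crossed product by a finite group yields a finitely generated Hilbert module over the original algebra, preserving postliminarity; alternatively, every irreducible representation of the crossed product is obtained by induction from an irreducible representation of an isotropy subgroup acting on a finite-dimensional projective representation tensored with an irreducible of the original algebra).

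Finally, the fixed-point subalgebra $(C(G_q)\rtimes_{\Ad\psi}\hat T_\tau)^{T_\tau}$ is the image of a conditional expectation (averaging over the finite group $T_\tau$) from a type I C$^*$-algebra, and it is Morita equivalent to $(C(G_q)\rtimes_{\Ad\psi}\hat T_\tau)\rtimes T_\tau$, which is again type I by the same argument as above. Since the type I property is preserved under Morita equivalence (and more directly, any C$^*$-subalgebra that is the fixed-point algebra of a compact group action on a type I algebra is itself type I), we conclude that $C(G_q^\tau)$ is of type I. The main thing to check carefully is just the identification of $C(G_q^\tau)$ with the fixed-point algebra, which is exactly the content of Theorem~\ref{thm:fixed-pt-in-crossed-prod}; no real obstacle remains, as all the group actions involved are by finite groups.
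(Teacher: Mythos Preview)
Your proposal is correct and follows the same overall strategy as the paper: use Theorem~\ref{thm:fixed-pt-in-crossed-prod} to realize $C(G_q^\tau)$ inside $C(G_q)\rtimes_{\Ad\psi}\hat T_\tau$, invoke that $C(G_q)$ is type~I, and that a crossed product by a finite group preserves type~I.

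The only difference is in how you pass from the crossed product to the fixed-point subalgebra. The paper does this in one stroke: since $C(G_q^\tau)\subset C(G_q)\rtimes_{\Ad\psi}\hat T_\tau$ and any C$^*$-subalgebra of a (separable) type~I algebra is again type~I (Glimm's theorem), you are done immediately. Your primary route via Morita equivalence with the iterated crossed product is correct and anticipates the argument the paper later uses for the primitive spectrum computation, but it is unnecessary here; the parenthetical remark you make at the end (``more directly, any C$^*$-subalgebra\ldots'') is in fact the whole argument.
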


\bp Since $C(G^\tau_q)\subset C(G_q) \rtimes_{\Ad \psi} \hat{T}_\tau$, this follows from the known fact that the C$^*$-algebra $C(G_q)$ is of type I.
\ep

Recall that the family $(C(G_q))_{0 < q < \infty}$ has canonical structure of a continuous field of C$^*$-al\-gebras~\cite{MR2861394}.

\begin{cor}\label{cor:cont-field}
The C$^*$-algebras $(C(G_q^\tau))_{0 < q < \infty}$ form a continuous field of C$^*$-algebras.
\end{cor}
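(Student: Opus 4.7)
The plan is to combine the crossed product description from Theorem~\ref{thm:fixed-pt-in-crossed-prod} with the cited continuous field structure on $(C(G_q))_{0<q<\infty}$. Since the finite group $\hat T_\tau$ and its Pontryagin dual $T_\tau$ are the groups involved, everything reduces to showing that two operations preserve continuous fields: (i) taking a crossed product by a finite group action compatible with the field, and (ii) taking fixed points under a finite group action compatible with the field.

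For (i), recall that for a finite group $H$ acting on a C$^*$-algebra $A$, the crossed product $A\rtimes H$ coincides as a Banach space with $\bigoplus_{h\in H} A$, with twisted multiplication. Given a continuous field $(A_q)_q$ equipped with a fiberwise action of $H$ such that the action preserves the space of continuous sections, the $|H|$-fold sum $\bigoplus_{h\in H}(A_q)_q$ is a continuous field, and the twisted product structure varies continuously because the action does. Applied to our case, I would verify that the action $\Ad\psi$ of $\hat T_\tau$ on $C(G_q)$ is compatible with the continuous field structure of~\cite{MR2861394}: the maximal torus $T\subset G_q$ embeds uniformly in $q$ as a set of group-like unitaries in $\Uni(G_q)$, and evaluating matrix coefficients of $G_q$ against elements of $T$ depends continuously on $q$ (this is built into the construction of the field), so conjugation by $\tilde\psi(\chi)$ is a continuous fiberwise action. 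This gives that $(C(G_q)\rtimes_{\Ad\psi}\hat T_\tau)_{0<q<\infty}$ is a continuous field.

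For (ii), the finite group $T_\tau$ acts by right translation $\rho$ on $C(G_q)$ combined with the dual action on $C^*(\hat T_\tau)$. The right translation action of $T$ on $(C(G_q))_q$ is again compatible with the continuous field structure for the same reason as above, and the dual action on the finite-dimensional algebra $C^*(\hat T_\tau)$ is $q$-independent. Using the conditional expectation $E(x) = \frac{1}{|T_\tau|}\sum_{t\in T_\tau} \alpha_t(x)$, the fixed-point subalgebra $(C(G_q)\rtimes_{\Ad\psi}\hat T_\tau)^{T_\tau}$ is the image of $E$, and $E$ sends continuous sections to continuous sections. Hence the fixed-point algebras form a continuous subfield. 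Combining with Theorem~\ref{thm:fixed-pt-in-crossed-prod} yields the corollary.

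The main obstacle I anticipate is the bookkeeping for the first point: one must verify that the torus action is compatible with whichever explicit presentation of the continuous field $(C(G_q))_q$ is used in~\cite{MR2861394}. This is straightforward conceptually, since matrix coefficients of irreducible representations of $G_q$ along weight vectors can be chosen as continuous sections and the torus $T$ acts on such coefficients by characters that do not depend on $q$, but the argument should be phrased in a way that references the precise construction of the field, rather than a generic assertion.
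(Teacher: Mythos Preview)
Your proposal is correct and follows exactly the approach implied by the paper, which states the result as an unproved corollary of Theorem~\ref{thm:fixed-pt-in-crossed-prod} together with the cited continuous field structure on $(C(G_q))_q$. Your outline supplies the details the paper leaves implicit: that crossed products and fixed-point algebras by finite groups preserve continuous fields when the actions are fiberwise compatible, and that the torus actions involved here are indeed compatible because $T$ sits uniformly in $\Uni(G_q)$.
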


\subsection{Primitive spectrum}
\label{sec:spectrum}

Let us turn to a description of the primitive spectrum of $C(G_q^\tau)$. We will concentrate on the case $q\ne1$, the case $q=1$ can be treated similarly. First of all observe that the action of $T_\tau$ on $C(G_q) \rtimes_{\Ad \psi} \hat{T}_\tau$ is saturated, since every spectral subspace contains a unitary. We thus obtain a strong Morita equivalence
\begin{equation} \label{eq:Morita}
C(G_q^\tau) \sim_M C(G_q) \rtimes_{\Ad \psi} \hat{T}_\tau\rtimes_{\rho,\widehat{\Ad \psi}}{T_\tau}\cong C(G_q)\rtimes_\rho T_\tau\rtimes_{\Ad\psi,\hat\rho}\hat T_\tau.
\end{equation}

Recall how to describe primitive spectra of crossed products, see e.g.~\cite{MR2288954}. Let $\Gamma$ be a finite group acting on a separable C$^*$-algebra $A$. Then any primitive ideal $J$ of $A \rtimes \Gamma$ is determined by the $\Gamma$-orbit of an ideal $I \in \Prim(A)$ and an ideal $J_0 \in \Prim(A \rtimes \Stab_\Gamma(I))$ by the condition $J_0 \cap A = I$ and $J = \Ind J_0$.

If $A$ is of type I, the ideals $J_0$ can, in turn, be described as follows. Put $\Gamma_0=\Stab_\Gamma(I)$. We want to describe irreducible representations of $A\rtimes\Gamma_0$ whose restrictions to $A$ have kernel $I$. Let $H$ be the space of an irreducible representation $\pi$ of~$A$ with kernel $I$. Then the action of $\Gamma_0$ on $A/I$ is implemented by a projective unitary representation $\gamma\mapsto u_\gamma$ of $\Gamma_0$ on $H$. Let~$\omega$ be the corresponding $2$-cocycle. Consider the regular $\bar\omega$-representation $\gamma\mapsto\lambda^{\bar\omega}_\gamma$ of $\Gamma_0$ on $\ell^2(\Gamma_0)$. Then $A\rtimes \Gamma_0$ has a representation on $H\otimes\ell^2(\Gamma_0)$ defined by $a\mapsto \pi(a)\otimes1$, $\gamma\mapsto u_\gamma\otimes\lambda^{\bar \omega}_\gamma$. Any irreducible representation of $A\rtimes \Gamma_0$ whose restriction to $A$ has kernel $I$ is a subrepresentation of this representation. So it remains to decompose the representation of $A\rtimes\Gamma_0$ on $H\otimes\ell^2(\Gamma_0)$ into irreducible subrepresentations. The von Neumann algebra generated by the image of $A\rtimes \Gamma_0$ is $B(H)\otimes C^*(\Gamma_0;\bar\omega)$. Therefore the representations we are interested in are in a one-to-one correspondence with irreducible representations of $C^*(\Gamma_0;\bar\omega)$.

To summarize, if $A$ is a separable C$^*$-algebra of type I and $\Gamma$ is a finite group acting on $A$, then the primitive spectrum $\Prim(A\rtimes\Gamma)$ can be identified with the set of pairs $([I],J)$, where $[I]$ is the $\Gamma$-orbit of an ideal $I\in\Prim(A)$, $J\in \Prim(C^*(\Gamma_I;\bar\omega_I))$, and $\omega_I$ is the $2$-cocycle on $\Gamma_I=\Stab_\Gamma(I)$ defined by a projective representation of $\Gamma_I$ implementing the action of $\Gamma_I$ on the image of $A$ under an irreducible representation with kernel $I$.

\smallskip

Returning to $C(G^\tau_q)$, for an element $w\in W$ of the Weyl group and a character $\chi\in\hat T_\tau$, put $\theta_w(\chi) = w^{-1}(\tilde{\psi}(\chi))\tilde{\psi}(\chi)^{-1}$.  This defines a homomorphism from $\hat{T}_\tau$ to $T$.

\begin{prop}
For $q>0$, $q\ne1$, the primitive spectrum of $C(G_q^\tau)$ can be identified with
\[
\coprod_{w \in W} (\theta_w(\hat{T}_\tau) \backslash T / T_\tau) \times \widehat{\theta_w^{-1}(T_\tau)}.
\]
\end{prop}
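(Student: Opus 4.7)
Proof plan. By the Morita equivalence \eqref{eq:Morita}, $\Prim(C(G_q^\tau))$ is canonically homeomorphic to $\Prim(B)$ for $B := C(G_q)\rtimes_\rho T_\tau\rtimes_{\Ad\psi,\hat\rho}\hat T_\tau$. The plan is to compute $\Prim(B)$ by applying the Mackey-type description recalled just before the statement in two stages, taking as input Soibelman's classification: for $q > 0$, $q\neq 1$, $\Prim(C(G_q))$ is naturally identified with $W\times T$, under which the right-translation action of $s\in T$ sends $(w,t)$ to $(w,ts)$ and the conjugation action $\Ad_s$ sends $(w,t)$ to $(w,\theta_w(s)t)$ with $\theta_w(s)=w^{-1}(s)s^{-1}$; in particular, $\Ad\psi(\chi)$ translates each $w$-stratum by $\theta_w(\chi)$.

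First stage: compute $\Prim(A)$ for $A:=C(G_q)\rtimes_\rho T_\tau$. Since $T_\tau\hookrightarrow T$ acts on $\Prim(C(G_q))=W\times T$ by pure translation in the $T$-factor, the action is free and all stabilizers are trivial. The Mackey machine gives $\Prim(A)=W\times T/T_\tau$, and the irreducible representation $\tilde\pi_{(w,tT_\tau)}$ is realised on $\bigoplus_{s\in T_\tau}\Hsp_{w,ts}$ as the canonical induced representation.

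Second stage: analyse the $\hat T_\tau$-action on $A$. First I would verify that the dual-action factor $\hat\rho_\chi$ acts trivially on $\Prim(A)$: on $\tilde\pi_{(w,tT_\tau)}$ it is implemented by the diagonal unitary $V_\chi\colon\xi\mapsto\chi(s)\xi$ on the summand $\Hsp_{w,ts}$, so each class is fixed. Hence the $\hat T_\tau$-action on $\Prim(A)$ coincides with that induced by $\Ad\psi$ alone, which on the $w$-stratum is translation by $\theta_w(\chi)$ in $T/T_\tau$. Thus the orbits in the $w$-stratum are exactly the double cosets $\theta_w(\hat T_\tau)\backslash T/T_\tau$, and the stabilizer of each such orbit is $\theta_w^{-1}(T_\tau)$.

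Third stage (the main obstacle): show that the resulting Mackey cocycle on the stabilizer $\Stab:=\theta_w^{-1}(T_\tau)$ is cohomologically trivial. For $\chi\in\Stab$, the automorphism $\alpha_\chi:=\Ad\psi(\chi)\otimes\hat\rho_\chi$ fixes the primitive ideal, and I would implement it on $\tilde\pi_{(w,tT_\tau)}$ by the explicit unitary
\[
U_\chi \;=\; V_\chi \cdot \tilde\pi_{(w,tT_\tau)}\!\bigl(u_{\theta_w(\chi)^{-1}}\bigr) \cdot \pi_{w,t}\!\bigl(\tilde\psi(\chi)\bigr),
\]
where $\pi_{w,t}(\tilde\psi(\chi))$ furnishes the intertwiner $\Hsp_{w,ts}\to\Hsp_{w,\theta_w(\chi)^{-1}ts}$, the multiplier $u_{\theta_w(\chi)^{-1}}\in\C[T_\tau]$ realigns the summand indexing, and $V_\chi$ accounts for $\hat\rho_\chi$. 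Each of the three factors is multiplicative in $\chi$ modulo a $Z(G)$-valued scalar coming from the ambiguity of the lift $\tilde\psi$; those defects cancel against each other once one uses that $Z(G)$ is $W$-fixed and that the stabilizer condition $\theta_w(\chi)\in T_\tau$ allows the scalar from $\pi_{w,t}(\tilde\psi(\chi_1))\pi_{w,t}(\tilde\psi(\chi_2))=\pi_{w,t}(\tilde\psi(\chi_1\chi_2))\cdot\zeta$ to be absorbed into the $u$-factor. Thus $\chi\mapsto U_\chi$ can be chosen to form a genuine unitary representation of $\Stab$, so the Mackey cocycle is trivial.

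Combining the three stages with the Mackey--Fell description recalled in the text yields
\[
\Prim(B) \;=\; \coprod_{w\in W}\bigl(\theta_w(\hat T_\tau)\backslash T/T_\tau\bigr)\times\widehat{\theta_w^{-1}(T_\tau)},
\]
which is the stated identification. The delicate point will be Stage~3: controlling the $Z(G)$-ambiguity in $\tilde\psi$ and verifying that the three constituents of $U_\chi$ compose without accumulating a projective cocycle, so that the Mackey description delivers $\widehat{\theta_w^{-1}(T_\tau)}$ rather than a twisted group algebra.
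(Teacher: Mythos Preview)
Your proof follows essentially the same route as the paper: reduce via the Morita equivalence~\eqref{eq:Morita}, identify $\Prim(C(G_q)\rtimes_\rho T_\tau)$ with $W\times T/T_\tau$ using freeness of the translation action, observe that the dual action fixes each induced class so that only $\Ad\psi$ moves points (by $\theta_w$-translation), and then verify triviality of the Mackey obstruction cocycle on the stabilizer $\theta_w^{-1}(T_\tau)$.

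The one genuine difference is in your Stage~3. You build the implementing unitary from three pieces, one of which involves $\pi_{w,t}(\tilde\psi(\chi))$ with its $Z(G)$-ambiguity, and then argue that the resulting scalar defects cancel. The paper bypasses this bookkeeping entirely: since $\psi$ takes values in $T/Z(G)$, it suffices to exhibit a genuine unitary representation $t\mapsto v_t$ of $T/Z(G)$ on the space of $\pi_w$ implementing $\Ad t$, and this is done concretely by setting $v_t e_n=\langle t,\alpha_i\rangle^n e_n$ on $\ell^2(\Z_+)$ for a simple reflection $w=s_i$ and taking tensor products for general $w$. This is cleaner than your cancellation argument and makes the triviality of the cocycle immediate, with no need to chase the $Z(G)$-ambiguity in the lift~$\tilde\psi$.
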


\begin{proof} In view of the strong Morita equivalence~\eqref{eq:Morita} it suffices to describe the primitive spectrum~of
$$
C(G_q)\rtimes_\rho T_\tau\rtimes_{\Ad\psi,\hat\rho}\hat T_\tau.
$$

Recall that the spectrum of $C(G_q)$ is $W \times T$. The right translation action of $T_\tau$ on $C(G_q)$ defines an action on $W\times T$ that is simply the action by translations on $T$. Therefore $\Prim(C(G_q)\rtimes_\rho T_\tau)$ can be identified with $W\times T/T_\tau$, and every irreducible representation of $C(G_q)\rtimes_\rho T_\tau$ is induced from an irreducible representation of $C(G_q)$.

Next, we have to understand the action of $\hat T_\tau$ on $\Prim(C(G_q)\rtimes_\rho T_\tau)$. Since the dual action preserves the equivalence class of any induced representation, we just have to look at the action~$\Ad\psi$. Given a representation $\pi_w\otimes\pi_t$ of $C(G_q)$ corresponding to $(w,t)\in W\times T$, we have
$$
(\pi_w\otimes\pi_t)(\Ad \psi(\chi^{-1}))\sim\pi_w\otimes\pi_{\theta_w(\chi)t}
$$
by~\citelist{\cite{MR2914062}*{Lemma~3.4}\cite{MR3009718}*{Lemma~8}}. It follows that the action of $\hat T_\tau$ on $\Prim(C(G_q)\rtimes_\rho T_\tau)=W\times T/T_\tau$ is by translations on $T/T_\tau$ via the homomorphisms $\theta_w\colon\hat T_\tau\to T$. Hence the space of $\hat T_\tau$-orbits is $\coprod_{w \in W}\theta_w(\hat{T}_\tau) \backslash T / T_\tau$, and the stabilizer of a point $(w,tT_\tau)$ is $\theta_w^{-1}(T_\tau)\subset\hat T_\tau$.

To finish the proof of the proposition it remains to show that the action $(\Ad\psi,\hat\rho)$ of $\theta_w^{-1}(T_\tau)$ on $C(G_q)\rtimes_\rho T_\tau$ can be implemented in the space of the induced representation $\Ind(\pi_w\otimes\pi_t)$ by a unitary representation of $\theta_w^{-1}(T_\tau)$. For this, in turn, it suffices to show that the equivalences
$$
(\pi_w\otimes\pi_{t'})(\Ad t^{-1})\sim\pi_w\otimes\pi_{w^{-1}(t)t^{-1}t'}
$$
from \citelist{\cite{MR2914062}*{Lemma~3.4}\cite{MR3009718}*{Lemma~8}} can be implemented by a unitary representation $t\mapsto v_t$ of $T/Z(G)$ on the space of representation $\pi_w$. But this is easy to see. Specifically, using the notation of~\cites{MR2914062,MR3009718}, if $w=s_i$ is the reflection corresponding to a simple root $\alpha_i$, then the required representation $t\mapsto v_t$ on $\ell^2(\Z_+)$ can be defined by $v_te_n=\pairing{t}{\alpha_i}^ne_n$. For arbitrary $w$ we just have to take tensor products of such representations.
\end{proof}

\begin{remk}
A description of the topology on $\Prim(C(G_q))$ is given in~\cite{MR2914062}. The above argument is, however, not quite enough to understand the topology on $\Prim(C(G^\tau_q))$.
\end{remk}

\subsection{K-theory}
\label{sec:k-theory}

The maximal torus $T$ is embedded in $\Uni(G^\tau_q)$, so it can be considered as a subgroup of $G_q^\tau$.  Let us consider the right translation action $\rho$ of $T$ on $C(G_q^\tau)$.  The crossed product $C(G_q^\tau) \rtimes_\rho T$ is a $\hat{T}$-C$^*$-algebra with respect to the dual action.

\begin{prop}\label{prop:KhatT-isom}
The dual action of $\hat{T}$ on $C(G_q^\tau) \rtimes_\rho T$ is equivariantly strongly Morita equivalent to an action on $C(G_q) \rtimes_\rho T$ that is homotopic to the dual action.
\end{prop}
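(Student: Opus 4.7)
My plan is to leverage the Morita equivalence~\eqref{eq:Morita} in its $T$-equivariant form, rearrange the resulting crossed products using $T_\tau\subset T$ and the commutativity of $\Ad\psi$ with right translation, and finish by deforming the twist $f$ from Section~\ref{sec:twisting-associator}.

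First, the right translation action $\rho$ of $T$ on $C(G_q^\tau)$ lifts to an action on $(C(G_q)\rtimes_{\Ad\psi}\hat T_\tau)\rtimes_\rho T_\tau$ via $\rho$ on $C(G_q)$ and trivially on the $\hat T_\tau$- and $T_\tau$-generators; this commutes with both $\Ad\psi$ (by abelianness of $T$) and with the inner right translation by $T_\tau$, so the saturation argument giving~\eqref{eq:Morita} can be carried out $T$-equivariantly. Taking crossed products by $T$ on both sides yields a $\hat T$-equivariant Morita equivalence
\[
C(G_q^\tau)\rtimes_\rho T \sim_M (C(G_q)\rtimes_{\Ad\psi}\hat T_\tau \rtimes_\rho T_\tau)\rtimes_\rho T.
\]
On the right-hand side, the $\hat T_\tau$-crossed product can be pulled past the two right-translation crossed products. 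In the remaining inner part, $T_\tau\subset T$ with both groups acting by right translation on $C(G_q)$, so the standard argument with central unitaries $v_sw_s^{-1}$ (for $s\in T_\tau$) gives an isomorphism $C(G_q)\rtimes_\rho T_\tau\rtimes_\rho T\cong(C(G_q)\rtimes_\rho T)\otimes C^*(T_\tau)$, leading to
\[
C(G_q^\tau)\rtimes_\rho T \sim_M \bigl((C(G_q)\rtimes_\rho T)\rtimes_{\Ad\psi}\hat T_\tau\bigr)\otimes C^*(T_\tau),
\]
where $\hat T$ acts dually on $C(G_q)\rtimes_\rho T$, trivially on the $\hat T_\tau$-generators, and via the restriction $\hat T\to\hat T_\tau$ on $C^*(T_\tau)\cong C(\hat T_\tau)$.

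The third step would absorb these two extra factors. On $C(G_q)\rtimes_\rho T$ the automorphism $\Ad\psi(\chi)$ is implemented by the Hopf-adjoint action of $\tilde\psi(\chi)\in T$ up to multiplication of the $w_t$-generators by a character; absorbing this Hopf-inner part into a change of generators and then applying Takai--Green duality to the factor $C^*(T_\tau)\cong C(\hat T_\tau)$ collapses the residual $\hat T_\tau$-crossed product against it, leaving $C(G_q)\rtimes_\rho T$ with a modified $\hat T$-action $\alpha^f$ whose deviation from the dual action is encoded by the twist $f\in\Uni(T^2)$. For the homotopy, since $f$ is $\T$-valued, choosing a measurable logarithm $f=\exp(2\pi i\theta)$ and setting $f_s=\exp(2\pi is\theta)$ for $s\in[0,1]$ yields a continuous family of unitaries in $\Uni(T^2)$; applying the same recipe to $f_s$ produces a continuous family of actions of $\hat T$ interpolating between the dual action at $s=0$ and $\alpha^f$ at $s=1$. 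The main difficulty will be faithfully tracking the dual $\hat T$-action through this sequence of identifications and expressing $\alpha^f$ concretely in terms of $f$, especially given that $\tilde\psi$ is only well-defined modulo $Z(G)$, so some care is needed to see that $\alpha^f$ depends only on the intrinsic data.
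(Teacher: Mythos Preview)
Your overall architecture matches the paper's: start from the $T$-equivariant version of the Morita equivalence~\eqref{eq:Morita}, rearrange iterated crossed products, and finish with a homotopy. But there is a genuine gap in the rearrangement step.

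In~\eqref{eq:Morita} the $T_\tau$-action on $C(G_q)\rtimes_{\Ad\psi}\hat T_\tau$ is by $(\rho,\widehat{\Ad\psi})$: right translation on $C(G_q)$ \emph{and} the dual action on $C^*(\hat T_\tau)$. You write $\rtimes_\rho T_\tau$ and then assert that ``the $\hat T_\tau$-crossed product can be pulled past the two right-translation crossed products.'' This swap fails precisely because of the $\widehat{\Ad\psi}$ component: the $T_\tau$-action is nontrivial on the $\hat T_\tau$-generators, so $\hat T_\tau$ and $T_\tau$ do not commute in the crossed product. The paper avoids this by first untangling the two pieces of the $T_\tau$-action via an explicit isomorphism $au_\chi u_t u_{t'}\mapsto au_\chi u_t u_{tt'}$, which converts $\rtimes_{\rho,\widehat{\Ad\psi}}T_\tau$ into $\rtimes_{\widehat{\Ad\psi}}T_\tau$ at the cost of modifying how $\hat T$ acts. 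Only after this does it become possible to strip off $\hat T_\tau\rtimes T_\tau$ (via a $1$-cocycle/exterior equivalence argument rather than a naive swap). Your displayed isomorphism $C(G_q)\rtimes_\rho T_\tau\rtimes_\rho T\cong(C(G_q)\rtimes_\rho T)\otimes C^*(T_\tau)$ is morally the same untangling, but you apply it after having already (illegitimately) moved $\hat T_\tau$ outside.

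A second, smaller issue: your homotopy argument via a logarithm of $f$ is too vague to be convincing. Once the action on $C(G_q)\rtimes_\rho T$ is correctly identified, it takes the concrete form $(\Ad\psi(r(\cdot)),\hat\rho)$ with $r\colon\hat T\to\hat T_\tau$ the restriction; the homotopy to the dual action is then immediate by choosing, for each element of a basis of $\hat T=P$, a path in the connected group $T$ from $\tilde\psi(r(\chi))$ to $e$. Trying to deform $f$ itself obscures this and runs into the problem that intermediate $f_s$ need not satisfy~\eqref{eq:f-from-tau} for any $\tau$, so ``applying the same recipe to $f_s$'' has no clear meaning.
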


\begin{proof}
If we identify $C(G_q^\tau)$ with $(C(G_q)\rtimes_{\Ad\psi}\hat T_\tau)^{T_\tau}$, then the action of $T$ by right translations on~$C(G_q^\tau)$ extends to an action on $C(G_q)\rtimes_{\Ad\psi}\hat T_\tau$ that is trivial on $C^*(T_\tau)$ and coincides with the action by right translations on $C(G_q)$. This action of $T$ on $C(G_q)\rtimes_{\Ad\psi}\hat T_\tau$ commutes with the action of $T_\tau$. Hence the strong Morita equivalence~\eqref{eq:Morita} is $T$-equivariant, and taking crossed products we get a $\hat T$-equivariant strong Morita equivalence
\begin{equation} \label{eq:Morita2}
C(G_q^\tau) \rtimes_\rho T\sim_M C(G_q)\rtimes_{\Ad\psi}\hat T_\tau\rtimes_{\rho,\widehat{Ad\psi}}{T_\tau}\rtimes_\rho T.
\end{equation}
Denote the C$^*$-algebra on the right hand side by $A$. We claim that $A$ is isomorphic to $$B = C(G_q) \rtimes_{\Ad \psi} \hat{T}_\tau \rtimes_{\widehat{\Ad \psi}} T_\tau \rtimes_\rho T.$$  Indeed, the map
$au_\chi u_tu_{t'}\mapsto au_{\chi}u_tu_{tt'}$ for $a\in C(G_q)$, $\chi\in\hat T_\tau$, $t\in T_\tau$ and $t'\in T$ is the required isomorphism. The dual action of $\hat T$ on $A$ corresponds to an action $\beta$ on $B$ which is given by the dual action on the copy of $C^*(T)$ and by the dual action on the copy of $C^*(T_\tau)$ via the canonical homomorphism $r\colon \hat{T} \rightarrow \hat{T}_\tau$.

The map $\hat T\ni\chi\mapsto u_{r(\chi)}\in C^*(\hat T_\tau)\subset M(B)$ is a $1$-cocycle for the action $\beta$. Therefore $\beta$ is strongly Morita equivalent to the action $\gamma$ defined by $\gamma_\chi = (\Ad {u_{r(\chi)}}) \beta_\chi$. This action is already trivial on~$C^*(T_\tau)$, while on $C(G_q)$  it is given by $\Ad {{\psi}(r(\chi))}$, and on $C^*(T)$ it coincides with the dual action.

Denote by $\delta$ the restriction of $\gamma$ to $C(G_q)\rtimes_\rho T\subset M(B)$. Then, similarly to~\eqref{eq:Morita2}, the actions $\delta$ and $\gamma$ are strongly Morita equivalent.

Combining the Morita equivalences that we have obtained, we conclude that the dual action of~$\hat T$ on $C(G^\tau_q)\rtimes_\rho T$ is strongly Morita equivalent to the action $\delta=(\Ad{{\psi}(r(\cdot))},\hat\rho)$ on $C(G_q)\rtimes_\rho T$. Choosing a basis in $\hat{T}=P$ and paths from $\tilde{\psi}(r(\chi))$ to the neutral element in $T$ for every basis element~$\chi$, we see that $\delta$ is homotopic to the dual action on $C(G_q)\rtimes_\rho T$.
\end{proof}

\begin{thm}\label{thm:kk-isom}
The C$^*$-algebra $C(G_q^\tau)$ is $\KK$-isomorphic to $C(G_q)$, hence to $C(G)$.
\end{thm}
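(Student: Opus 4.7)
The plan is to bootstrap from the equivariant Morita equivalence supplied by Proposition~\ref{prop:KhatT-isom} via Takai duality and homotopy invariance of $\KK$-theory, and then invoke a known $\KK$-equivalence between $C(G_q)$ and $C(G)$ that comes from the continuous field structure of Corollary~\ref{cor:cont-field}.

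More concretely, I would first take the crossed product by the dual action of $\hat T$ on both sides of Proposition~\ref{prop:KhatT-isom}. On the left-hand side Takai duality gives
\[
C(G_q^\tau)\rtimes_\rho T\rtimes_{\hat\rho}\hat T \sim_M C(G_q^\tau)\otimes\cat K(L^2(T)),
\]
which is strongly Morita equivalent (hence $\KK$-equivalent) to $C(G_q^\tau)$. On the right-hand side one gets the crossed product of $C(G_q)\rtimes_\rho T$ by an action $\delta$ of $\hat T$ which is \emph{homotopic}, not equal, to the dual action; strong Morita equivalence being preserved under taking crossed products by the same group, the two crossed products are Morita equivalent. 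The homotopy invariance of $\KK$-theory for crossed products by a fixed locally compact group (see Kasparov's work on equivariant $\KK$, or equivalently the Connes--Thom type arguments for abelian groups) then gives a $\KK$-equivalence between $C(G_q)\rtimes_\rho T\rtimes_\delta \hat T$ and $C(G_q)\rtimes_\rho T\rtimes_{\hat\rho}\hat T$, and the latter is Morita equivalent to $C(G_q)$ by Takai duality again. Chaining these $\KK$-equivalences yields $C(G_q^\tau)\sim_{\KK}C(G_q)$.

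For the last clause, the $\KK$-equivalence $C(G_q)\sim_{\KK}C(G)$ is by now standard: the continuous field $(C(G_q))_{0<q<\infty}$ of \cite{MR2861394} can be used, together with the fact that $G$ has torsion-free $\KK$-theory, to produce a $\KK$-equivalence (see, e.g., the arguments of Nagy or Neshveyev--Tuset in the $\SU_q(n)$ case and their generalizations). One then composes with the equivalence $C(G_q^\tau)\sim_{\KK}C(G_q)$ just obtained.

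The main obstacle I anticipate is bookkeeping rather than any substantive difficulty: one must verify carefully that the equivariant Morita equivalence of Proposition~\ref{prop:KhatT-isom} yields, after taking the crossed product by $\hat T$, a Morita equivalence that is compatible enough with Takai duality to be tracked at the level of $\KK$-classes, and one must invoke a precise version of homotopy invariance that applies to a norm-continuous path of actions of a (non-compact) abelian group rather than merely to a path of C$^*$-algebras. Both points are well known in the literature on $\KK$-theory of crossed products, so no new technique is needed; the argument essentially reduces the theorem to Proposition~\ref{prop:KhatT-isom} plus the already-established $\KK$-equivalence $C(G_q)\sim_{\KK}C(G)$.
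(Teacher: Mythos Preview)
Your proposal is correct and takes essentially the same approach as the paper: combine Proposition~\ref{prop:KhatT-isom} with Takesaki--Takai duality and the fact that homotopic $\hat T$-actions yield $\KK$-equivalent crossed products. The paper justifies this last step by invoking the strong Baum--Connes conjecture for the torsion-free abelian group $\hat T$ (equivalently, iterated Pimsner--Voiculescu, as noted in the remark following the theorem), and cites \cite{MR2914062} directly for $C(G_q)\sim_{\KK}C(G)$; just be careful that your phrase ``homotopy invariance of $\KK$-theory for crossed products by a fixed locally compact group'' is not valid in general, so the specific nature of $\hat T\cong\Z^r$ is essential.
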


\begin{proof}
Since the torsion-free commutative group $\hat{T}$ satisfies the strong Baum--Connes conjecture, the functor $A\mapsto A\rtimes\hat T$ maps homotopic actions into $\KK$-isomorphisms of the corresponding crossed products. By the previous proposition, this, together with the Takesaki--Takai duality, implies that $C(G^\tau_q)$ and $C(G_q)$ are $\KK$-isomorphic. By~\cite{MR2914062} we also know that $C(G_q)$ is $\KK$-isomorphic to~$C(G)$.
\end{proof}

\begin{remk} \mbox{\ }\newline\noindent
(i) The above proof shows that the continuous field of Corollary~\ref{cor:cont-field} is a $\KK$-fibration in the sense of~\cite{MR2511635}.  The argument of~\cite{MR2861394} applies to the Dirac operator $D$ given by Remark~\ref{remk:isospec-deform-Dirac}, and we obtain that the K-homology class of $D$ is independent of $q$.  The bi-equivariance of $D$ and the construction in the proof of Proposition~\ref{prop:KhatT-isom} imply that the K-homology class of $D$ is also independent of $\tau$ up to the isomorphism of Theorem~\ref{thm:kk-isom}.
\newline\noindent
(ii) For the group $\hat T$ the strong Baum--Connes conjecture is a consequence of the Pimsner--Voiculescu sequence in $\KK$-theory. Therefore the proof of Theorem~\ref{thm:kk-isom} can be written such that it relies only on this sequence, see e.g.~\cite{sangha}*{Section~5.1} for a related argument.
\end{remk}

\bigskip

\section{Twisted \texorpdfstring{$\SU_q(n)$}{SUq(n)}}

\subsection{Special unitary group}
\label{seq:rep-thry-SUn}

Let us review the structure of $\SU(n)$, see e.g.~\cite{MR1153249}*{Chapter~15}.  For the sake of presentation, it is convenient to consider also the unitary group $\U(n)$.  We take the subgroup of the diagonal matrices $\tilde{T}$ as a maximal torus of $\U(n)$, and take $T = \tilde{T} \cap \SU(n)$ as a maximal torus of $\SU(n)$.  We will often identify $\tilde T$ with $\T^n$. We write the corresponding Cartan subalgebras as $\tilde{\liealg{h}} \subset \liealg{gl}_n$ and $\liealg{h} \subset \liealg{sl}_n$.

Let $\{e_{i j}\}_{i, j = 1}^n$ be the matrix units in $M_n(\C) = \liealg{gl}_n$, and $\{\tilde L_i\}^n_{i=1}$ be the basis in $\tilde{\liealg{h}}^*$ dual to the basis $\{e_{ii}\}^n_{i=1}$ in $\tilde{\liealg{h}}$. Denote by $L_i$ the image of $\tilde L_i$ in ${\liealg{h}}^*$. Therefore any $n-1$ elements among $L_1,\dots, L_n$ form a basis in ${\liealg{h}}^*$, and we have $\sum_iL_i=0$.

The weight lattice $P\subset{\liealg{h}}^*$ is generated by the elements $L_i$. The pairing between $T$ and $P$ is given by $\pairing{t}{L_i}=t_i$ for $t\in T\subset\T^n$. As simple roots we take
$$
\alpha_i=L_i-L_{i+1}, \ \ 1\le i\le n-1.
$$
The fundamental weights are then given by
$$
\varpi_i = L_1 + \dots + L_i, \ \ 1\le i\le n-1.
$$

Consider the homomorphism $|\cdot|\colon P\to\Z$ such that $L_1\mapsto n-1$ and $L_i\mapsto-1$ for $1<i\le n$. In other words,
$$
|a_1\varpi_1+\dots+a_{n-1}\varpi_{n-1}| =\lambda_1+\dots+\lambda_{n-1},
$$
where $\lambda_{n-i}$ is given by $a_1 + \cdots + a_i$. The image of $Q$ under $|\cdot|$  is $n\Z$, and therefore we can use this homomorphism to identify $P/Q$ with $\Z/n\Z$.

\subsection{Twisted quantum special unitary groups}
\label{sec:quant-spec-unit}

By Proposition~\ref{p3cocycles}, the cohomology group $H^3(\Z/n\Z;\T)$ is isomorphic to $\Z/n\Z$, and a cocycle generating this group can be defined by
$$
\phi(a,b,c)=\zeta_n^{\omega_n(a,b)c},\ \ \text{where}\ \ \zeta_n=e^{2\pi i/n}\ \ \text{and}\ \ \omega_n(a,b)=\biggl\lfloor\frac{a+b}{n}\biggr\rfloor
-\biggl\lfloor\frac{a}{n}\biggr\rfloor - \biggl\lfloor\frac{b}{n}\biggr\rfloor.
$$
Using this generator we identify $H^3(\Z/n\Z;\T)$ with the group $\mu_n\subset\T$ of units of order $n$. Therefore, given $\zeta\in\mu_n$, we have a category $\Rep(\SU_q(n))^\zeta$ with associativity morphisms defined by multiplication by $\zeta^{\omega_n(|\lambda|,|\eta|)|\nu|}$ on the tensor product $V_\lambda\otimes V_\eta\otimes V_\nu$ of irreducible $\Uni_q(\liealg{g})$-modules with highest weights $\lambda,\eta,\nu$. This agrees with the conventions of Kazhdan and Wenzl~\cite{MR1237835}.

It is also convenient to identify $Z(\SU(n))$ with the group $\mu_n$. Thus, for $\tau=(\tau_1,\dots,\tau_{n-1})\in\mu_n^{n-1}$, we can define a twisting $\SU^\tau_q(n)$ of $\SU_q(n)$. Its representation category is one of $\Rep(\SU_q(n))^\zeta$, and to find $\zeta$ we have to compute the homomorphism $\Theta\colon Z(\SU(n))^{n-1}\to H^3(P/Q;\T)$ introduced in Section~\ref{sec:isomorphisms}. Under our identifications this becomes a homomorphism $\mu_n^{n-1}\to\mu_n$.

\begin{prop}
We have $\Theta(\tau)=\prod^{n-1}_{i=1}\tau_i^{-i}$.
\end{prop}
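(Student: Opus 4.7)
The plan is to compute $\Theta(\tau)$ explicitly by writing down the cochain $f$ of \eqref{eq:f-from-tau} with respect to a well-chosen section $P/Q\to P$, evaluating $\partial f$ (hence $\Phi^\tau=(\partial f)^*$) on that section, and matching the result to the generator $\phi(a,b,c)=\zeta_n^{\omega_n(a,b)c}$ of $H^3(\Z/n\Z;\T)\cong\mu_n$.

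Take the section $\lambda_a=-aL_n$ for $a=0,1,\ldots,n-1$; since $|L_n|=-1$, we have $|\lambda_a|=a$, so $\lambda_a$ represents the class $a$. The defect of additivity of this section is concentrated in one explicit element of $Q$: for $a,b\in\{0,\ldots,n-1\}$, $\lambda_a+\lambda_b=\lambda_{a+b\bmod n}-\omega_n(a,b)\,nL_n$, and the telescoping identity
$$
\sum_{j=1}^{n-1}j\alpha_j=\sum_{j=1}^{n-1}j(L_j-L_{j+1})=L_1+\cdots+L_{n-1}-(n-1)L_n=-nL_n
$$
shows that, in the recipe just below \eqref{eq:f-from-tau}, the coefficients for the expansion of $\lambda_a+\lambda_b$ over its representative are $m_j=j\,\omega_n(a,b)$.

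Normalize $f$ by declaring $f(\lambda_a,\mu)=1$ for every representative and extending via the second relation in \eqref{eq:f-from-tau}. Three of the four factors in $\partial f(\lambda_a,\lambda_b,\lambda_c)$ then equal $1$, using the normalization together with the $Q$-invariance of $f$ in the second argument, so only $f(\lambda_a+\lambda_b,\lambda_c)^{-1}$ contributes. With the identification $Z(\SU(n))\cong\mu_n$ and $\tau_j=\zeta_n^{k_j}I_n$, the pairing gives $\langle\tau_j,-cL_n\rangle=\zeta_n^{-k_jc}$, whence
$$
\partial f(\lambda_a,\lambda_b,\lambda_c)=\prod_{j=1}^{n-1}\zeta_n^{jk_jc\,\omega_n(a,b)}=\zeta_n^{(\sum_j jk_j)\,\omega_n(a,b)\,c}.
$$
Taking the conjugate, $\Phi^\tau=(\partial f)^*$ equals $\phi^{-\sum_j jk_j}$ as a 3-cocycle on $P/Q$, and hence $\Theta(\tau)=\zeta_n^{-\sum_j jk_j}=\prod_{j=1}^{n-1}\tau_j^{-j}$.

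The computation is essentially bookkeeping; the two points that require care are the conjugation $\Phi^\tau=(\partial f)^*$, which flips the sign in the exponent, and the pairing convention $\langle\zeta I_n,L_i\rangle=\zeta$, which yields $\langle\tau_j,\lambda_c\rangle=\zeta_n^{-k_jc}$ rather than $\zeta_n^{k_jc}$ because of the sign in $\lambda_c=-cL_n$. The choice $\lambda_a=-aL_n$ is exactly what makes three of the four factors of $\partial f$ collapse to $1$ and isolates the whole calculation into a single, easily expandable element of $Q$.
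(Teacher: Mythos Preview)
Your proof is correct. The core computation is the same as the paper's: both use the section $a\mapsto a\varpi_{n-1}=-aL_n$ and the identity $n\varpi_{n-1}=\sum_{j=1}^{n-1}j\alpha_j$. The difference is in packaging. The paper routes the computation through Proposition~\ref{pTor}: it encodes the transformation rule of $f$ as a character $\chi$ on $Q\otimes(P/Q)$, observes that the cohomology class depends only on the restriction of $\chi$ to $\ker(Q\otimes(\Z/n\Z)\to P\otimes(\Z/n\Z))\cong\Tor^\Z_1(\Z/n\Z,\Z/n\Z)$, and then transports everything to the simpler resolution $n\Z\to\Z\to\Z/n\Z$ via the map $1\mapsto\varpi_{n-1}$ before writing down an explicit cochain. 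You instead compute $\partial f$ directly on the chosen representatives, noting that the normalization and the $Q$-invariance in the second variable kill three of the four factors.

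Your route is more elementary and self-contained; it avoids invoking the appendix machinery. The paper's route has the advantage of showing \emph{why} the answer depends only on the combination $\prod_i\tau_i^i$ before any explicit cochain is written down (it is the value of $\chi$ on the generator of $\Tor^\Z_1$), and it illustrates the general mechanism of Proposition~\ref{pTor}. Both arguments hinge on the same two bookkeeping points you flagged: the conjugation $\Phi^\tau=(\partial f)^*$ and the sign in $\langle\tau_j,-cL_n\rangle=\tau_j^{-c}$.
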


\bp Recall the construction of $\Theta$. We choose a function $f\colon P\times P\to\T$ such that it factors through $P\times (P/Q)$ and
$
f(\lambda+\alpha_i,\mu)=\overline{\pairing{\tau_i}{\mu}}f(\lambda,\mu).
$
Then $\Theta(\tau)$ is the cohomology class of $\partial f$ in $H^3(P/Q;\T)$.

Note that $\pairing{\tau_i}{\mu}=\tau_i^{-|\mu|}$, which is immediate for $\mu=L_j$, and define a character $\chi$ of $Q\otimes(P/Q)=Q\otimes(\Z/n\Z)$ by
$$
\chi(\alpha_i\otimes k)=\tau_i^k\ \ \text{for}\ \ 1\le i\le n-1\ \ \text{and}\ \ k\in\Z/n\Z,
$$
so that $f(\lambda+\alpha,\mu)=\chi(\alpha\otimes|\mu|)f(\lambda,\mu)$ for all $\alpha\in Q$. By Proposition~\ref{pTor}, the cohomology class of $\partial f$ depends only on the restriction of $\chi$ to
$$
\ker(Q\otimes(\Z/n\Z)\to P\otimes(\Z/n\Z))\cong\Tor^\Z_1(\Z/n\Z,\Z/n\Z)\cong\Z/n\Z,
$$
and by varying $\tau$ we get this way an isomorphism $\Hom(\Tor^\Z_1(\Z/n\Z,\Z/n\Z),\T)\cong H^3(\Z/n\Z;\T)$. In order to compute this isomorphism we can use the resolution $n\Z\to\Z\to\Z/n\Z$ instead of $Q\to P\xrightarrow{|\cdot|}\Z/n\Z$. Define a morphism between these resolutions by $\Z\to P$, $1\mapsto\varpi_{n-1}=-L_n$. By pulling back $\chi$ under this morphism, we get a character $\tilde\chi$ of $(n\Z)\otimes(\Z/n\Z)$ such that
$$
\tilde\chi(n\otimes k)=\chi(n\varpi_{n-1}\otimes k).
$$
We have $n\varpi_{n-1}=\sum^{n-1}_{i=1}i\alpha_i$. Therefore
$$
\tilde\chi(n\otimes k)=\zeta^k,\ \ \text{where}\ \ \zeta=\prod^{n-1}_{i=1}\tau_i^i.
$$
Then the function $\tilde f\colon\Z\times\Z\to\T$ defined by
$$
\tilde f(a,b)=\zeta^{\left\lfloor\frac{a}{n}\right\rfloor b},
$$
factors through $\Z\times(\Z/n\Z)$, $\tilde f(a+n,b)=\tilde\chi(n\otimes b)\tilde f(a,b)$ and $(\partial\tilde f)(a,b,c)=\zeta^{-\omega_n(a,b)c}$. Therefore the class of $\partial\tilde f$ in $H^3(\Z/n\Z;\T)=\mu_n$ is $\zeta^{-1}$.
\ep

In Section~\ref{sec:isomorphisms} we also introduced a homomorphism $\Upsilon$. In the present case we have $H^2(P/Q;\T)=0$, so $\Upsilon$ is a homomorphism $\ker\Theta\to H^2(P;\T)$.

\begin{lem}
The homomorphism $\Upsilon\colon \ker\Theta\to H^2(P;\T)$ is injective.
\end{lem}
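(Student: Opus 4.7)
The plan is to convert $\Upsilon(\tau)=0$ into a pointwise symmetry condition on the antisymmetrization $\tilde f(\lambda,\mu):=f(\lambda,\mu)/f(\mu,\lambda)$, and then read off $\tau=1$ from the way $\tilde f$ transforms under translations by the root lattice. The starting point is that $P\cong\Z^{n-1}$ is free abelian, so $H^2(P;\T)=\Hom(\bigwedge^2 P,\T)$, with the class of a 2-cocycle $c$ detected by the alternating bicharacter $(\lambda,\mu)\mapsto c(\lambda,\mu)c(\mu,\lambda)^{-1}$; hence $c$ is a coboundary iff this bicharacter is trivial. Since $P/Q=\Z/n\Z$ is cyclic and $\T$ is divisible, one also has $H^2(P/Q;\T)=0$, as already noted, so $\Upsilon(\tau)$ is literally the class of the 2-cocycle $fg^{-1}$ on $P$.

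Suppose now $\Upsilon(\tau)=0$. Applying the antisymmetrization map to $fg^{-1}$, the vanishing becomes the pointwise identity
\[ \tilde f(\lambda,\mu) = \tilde g(\bar\lambda,\bar\mu) \qquad \text{for all } \lambda,\mu\in P, \]
where $\tilde g(\bar\lambda,\bar\mu):=g(\bar\lambda,\bar\mu)/g(\bar\mu,\bar\lambda)$. The right-hand side visibly factors through $(P/Q)^2$, so the same must hold for $\tilde f$. On the other hand, applying the two defining relations $f(\lambda,\mu+\beta)=f(\lambda,\mu)$ and $f(\lambda+\alpha_i,\mu)=\pairing{\tau_i}{\mu}f(\lambda,\mu)$ for $\beta\in Q$ to the numerator and denominator of $\tilde f(\lambda+\alpha,\mu)$ yields the clean formula
\[ \tilde f(\lambda+\alpha,\mu) = \pairing{\eta(\alpha)}{\mu}\,\tilde f(\lambda,\mu) \qquad (\alpha\in Q,\ \mu\in P), \]
where $\eta\colon Q\to Z(G)$ is the homomorphism $\alpha_i\mapsto\tau_i$. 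Combining the two, $\pairing{\eta(\alpha)}{\mu}=1$ for every $\alpha\in Q$ and every $\mu\in P$. Setting $\alpha=\alpha_i$, this says that $\tau_i\in Z(G)$ is trivial as a character of $P/Q$, and via the identification $Z(G)=\widehat{P/Q}$ we conclude $\tau_i=1$ for all $i$, hence $\tau=1$.

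I do not expect any deeper obstacle. The one piece of bookkeeping is that $f$ and $g$ are each determined only up to pullbacks of functions on $(P/Q)^2$; such factors modify $\tilde f$ and $\tilde g$ only by functions of $(\bar\lambda,\bar\mu)$ and so do not affect the condition that $\tilde f$ factor through $(P/Q)^2$. If anything the argument is uniform: the same computation proves injectivity of $\Upsilon$ whenever $P$ is free abelian and $P/Q$ is cyclic, which covers the present situation.
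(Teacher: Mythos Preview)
Your proof is correct and rests on the same key fact as the paper's: a $2$-cocycle on the free abelian group $P$ is a coboundary iff its antisymmetrization vanishes. The executions differ, though. The paper proceeds explicitly: using the relation $\prod_i\tau_i^i=1$ that characterizes $\ker\Theta$, it extends the character $\chi$ of $Q\otimes(P/Q)$ to a bicharacter on $P\times P$, takes this bicharacter itself as $f$ (so $\partial f=0$ and no auxiliary $g$ is needed), and then computes $f(L_i,L_j)\overline{f(L_j,L_i)}=(\tau_i\cdots\tau_{j-1})^{-1}$ for $1<i<j\le n$; symmetry forces $\tau_2=\dots=\tau_{n-1}=1$, and the $\ker\Theta$ relation then gives $\tau_1=1$. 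Your route is more streamlined: you keep $f$ general, never invoke the explicit description of $\ker\Theta$, and read off all $\tau_i=1$ at once from the transformation law $\tilde f(\lambda+\alpha_i,\mu)=\pairing{\tau_i}{\mu}\tilde f(\lambda,\mu)$ together with the fact that $\tilde f$ must descend to $(P/Q)^2$. As you note, this version is basis-free and applies uniformly whenever $P$ is free abelian and $H^2(P/Q;\T)=0$.
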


\bp Assume $\tau\in\ker\Theta$, so $\prod^{n-1}_{i=1}\tau_i^i=1$. In this case the character $\chi$ of $Q\otimes (P/Q)$ from the proof of the previous proposition extends to $P\otimes(P/Q)$ by
$$
\chi(L_i\otimes\mu)=(\tau_1\dots\tau_{i-1})^{-|\mu|}\ \ \text{for}\ \ 1\le i\le n\ \ \text{and}\ \ \mu\in P.
$$
Therefore if we consider $\chi$ as a function on $P\times P$, we can take it as a function $f$ in that proof. Then~$f$ is a $2$-cocycle, and by definition, the image of~$\tau$ under $\Upsilon$ is the cohomology class of $\bar f$. It is well-known, and also follows from Proposition~\ref{pabelian}, that $f$ is a coboundary if and only if $f$ is symmetric. For $1<i<j\le n$ we have
$$
f(L_i,L_j)\overline{f(L_j,L_i)}=(\tau_i\dots\tau_{j-1})^{-1}.
$$
So if $f$ is symmetric, then $\tau_2=\dots=\tau_{n-1}=1$, but then also $\tau_1=1$.
\ep

Therefore Proposition~\ref{piso} does not give us any nontrivial isomorphisms between the quantum groups $\SU^\tau_q(n)$. On the other hand, the flip map on the Dynkin diagram induces an automorphism of $\Uni(\SU_q(n))$ such that $K_i\mapsto K_{n-i}$ and $E_i\mapsto E_{n-i}$ for $1\le i\le n-1$. On $Z(\SU(n))\subset \Uni(\SU_q(n))$ this automorphism is $t\mapsto t^{-1}$. It follows that it induces isomorphisms
$$
\SU^{(\tau_1,\dots,\tau_{n-1})}_q(n)\cong \SU^{(\tau^{-1}_{n-1},\dots,\tau_1^{-1})}_q(n).
$$
For $0<q<1$, these seem to be the only obvious isomorphisms between the quantum groups $\SU^\tau_q(n)$.

\subsection{Generators and relations}
\label{sec:generators-relations}

The C$^*$-algebra $C(\SU_q(n))$ is generated by the matrix coefficients $(u_{i j})_{1 \le i, j \le n}$ of the natural representation of $\SU_q(n)$ on $\C^n$, the fundamental representation with highest weight~$\varpi_1$.  They  satisfy the relations~\citelist{\cite{MR934283}\cite{MR943923}}
\begin{gather}
\label{eq:SUqN-rel-1}
  u_{i j} u_{i l} = q u_{i l} u_{i j} \quad (j < l), \quad u_{i j} u_{k j} = q u_{k j} u_{i j} \quad (i < k),\\
\label{eq:SUqN-rel-2}
  u_{i j} u_{k l} = u_{k l} u_{i j} \quad (i > k, j < l), \quad u_{i j} u_{k l} - u_{k l} u_{i j} = (q - q^{-1}) u_{i l} u_{k j} \quad (i < k, j < l),\\
\label{eq:SUqN-rel-3}
 \qdet((u_{i j})_{i, j}) =  \sum_{\sigma \in S_n} (-q)^{\absv{\sigma}} u_{1 \sigma(1)} \cdots u_{n \sigma(n)} = 1.
\end{gather}
Here, $\absv{\sigma}$ is the inversion number of the permutation $\sigma$. The involution is defined by $$u_{ij}^*=(-q)^{j-i}\qdet(U^{\hat i}_{\hat j}),$$ where $U^{\hat i}_{\hat j}$ is the matrix obtained from $U=(u_{kl})_{k,l}$ by deleting the $i$-th row and $j$-th column.

In order to find generators and relations of $\C[\SU^\tau_q(n)]$, we will use the embedding of the algebra $\C[\SU^\tau_q(n)]$ into $\C[\SU_q(n)]\rtimes_{\Ad\psi}\hat T_\tau$ described in Theorem~\ref{thm:fixed-pt-in-crossed-prod}. Recall that $\psi\colon\hat T_\tau\to T/Z(\SU(n))=T/\mu_n$ is the homomorphism such that $\pairing{\tilde\psi(\chi)}{\alpha_i}=\chi(\tau_i)$, where $\tilde\psi(\chi)$ is a lift of $\psi(\chi)$ to $T$. Hence
$$
\tilde\psi(\chi)=(z,z\chi(\tau_1)^{-1},\dots, z\chi(\tau_1\dots\tau_{n-1})^{-1})\in T\subset \T^n,
$$
where $z\in\T$ is a number such that $z^n=\prod^{n-1}_{i=1}\chi(\tau_i)^{-i}$. It follows that
\begin{equation} \label{eaction}
(\Ad\psi(\chi))(u_{ij})=\Bigl(\prod_{1\le p<i}\chi(\tau_p)\Bigr)
\Bigl(\prod_{1\le p<j}\chi(\tau_p)^{-1}\Bigr)u_{ij}.
\end{equation}

Now, the algebra $\C[\SU^\tau_q(n)]$ is generated by matrix coefficients of the fundamental representation of $\SU^\tau_q(n)$ with highest weight $\varpi_1$. Under the embedding $\C[\SU^\tau_q(n)]\hookrightarrow\C[\SU_q(n)]\rtimes_{\Ad\psi}\hat T_\tau$, these matrix coefficients correspond to $v_{ij}=u_{ij}u_{\chi_\nat}$, where $\chi_\nat\in\hat T_\tau$ is the character determined by the natural representation of $\SU_q(n)$ on $\C^n$, so $\chi_\nat(\tau_i)=\tau_i$. From \eqref{eq:SUqN-rel-1}-\eqref{eq:SUqN-rel-3} we then get the following relations:
\begin{gather}
\label{eq:SUqtN-rel-1}
  v_{i j} v_{i l} = \Bigl ( \prod_{j \le p < l} \tau_p^{-1} \Bigr ) q v_{i l} v_{i j} \quad (j < l), \quad  v_{i j} v_{k j} = \Bigl ( \prod_{i \le p < k} \tau_p \Bigr ) q v_{k j} v_{i j} \quad (i < k),\\
\label{eq:SUqtN-rel-2}
  v_{i j} v_{k l} = \Bigl ( \prod_{i < p \le k} \tau_p^{-1} \Bigr ) \Bigl ( \prod_{j \le p < l} \tau_p^{-1} \Bigr ) v_{k l} v_{i j} \quad (i > k, j < l),\\
\label{eq:SUqtN-rel-3}
  \Bigl ( \prod_{j \le p < l} \tau_p \Bigr ) v_{i j} v_{k l} - \Bigl ( \prod_{i \le p < k} \tau_p \Bigr ) v_{k l} v_{i j} = (q - q^{-1}) v_{i l} v_{k j} \quad (i < k, j < l),\\
\label{eq:SUqtN-rel-4}
  \sum_{\sigma \in S_n} \tau^{m(\sigma)} (-q)^{\absv{\sigma}} v_{1 \sigma(1)} \cdots v_{n \sigma(n)} = 1,
\end{gather}
where $m(\sigma)=(m(\sigma)_1,\dots,m(\sigma)_{n-1})$ is the multi-index given by $m(\sigma)_i = \sum_{k=2}^n (k - 1) m^{(k, \sigma(k))}_i$, and
$$
m^{(k,j)}_i=\begin{cases}1,&\text{if}\ \ k\le i<j,\\
-1,&\text{if}\ \ j\le i<k,\\
0,&\text{otherwise}.\end{cases}
$$

\begin{prop}
For any $\tau\in\mu_n^{n-1}$, the algebra $\C[\SU^\tau_q(n)]$ is a universal algebra generated by elements $v_{ij}$ satisfying relations \eqref{eq:SUqtN-rel-1}-\eqref{eq:SUqtN-rel-4}.
\end{prop}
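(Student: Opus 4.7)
The plan is to embed the universal algebra $A$ defined by \eqref{eq:SUqtN-rel-1}--\eqref{eq:SUqtN-rel-4} into a smash product $\tilde A = A \rtimes \hat T_\tau$ and identify $\tilde A$ with $\C[\SU_q(n)] \rtimes_{\Ad\psi} \hat T_\tau$. By the derivation preceding the statement, the elements $v_{ij} = u_{ij}u_{\chi_\nat}$ in $\C[\SU_q(n)]\rtimes_{\Ad\psi}\hat T_\tau$ satisfy \eqref{eq:SUqtN-rel-1}--\eqref{eq:SUqtN-rel-4} and are matrix coefficients of the fundamental representation of $\SU^\tau_q(n)$ with highest weight $\varpi_1$, so we obtain a canonical surjection $\pi \colon A \to \C[\SU^\tau_q(n)]$, and the task reduces to proving that $\pi$ is injective.

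First I would define an action of $\hat T_\tau$ on $A$ by
$$
\chi \cdot v_{ij} = f(\chi,i,j) v_{ij}, \qquad f(\chi,i,j)=\Bigl(\prod_{1 \le p < i} \chi(\tau_p)\Bigr)\Bigl(\prod_{1 \le p < j} \chi(\tau_p)^{-1}\Bigr),
$$
and check it preserves each relation. Relations \eqref{eq:SUqtN-rel-1}--\eqref{eq:SUqtN-rel-3} are rescaled on both sides by the same factor thanks to the identity $f(\chi,i,j)f(\chi,k,l)=f(\chi,i,l)f(\chi,k,j)$, which is immediate from the definition of $f$. For the quantum determinant \eqref{eq:SUqtN-rel-4} each summand $v_{1\sigma(1)}\cdots v_{n\sigma(n)}$ is rescaled by $\prod_{k=1}^n f(\chi,k,\sigma(k))$; collecting powers of $\chi(\tau_p)$ this product telescopes to $\prod_{p=1}^{n-1} \chi(\tau_p)^{n-p}\chi(\tau_p)^{-(n-p)}=1$, since $\sigma$ is a bijection of $\{1,\dots,n\}$ so $|\{k:\sigma(k)>p\}|=n-p$. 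The action is therefore well-defined, and we form the smash product $\tilde A = A \rtimes \hat T_\tau$, into which $A$ embeds.

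Next I would build mutually inverse isomorphisms between $\tilde A$ and $\C[\SU_q(n)]\rtimes_{\Ad\psi}\hat T_\tau$. The map $\Phi\colon\tilde A\to\C[\SU_q(n)]\rtimes_{\Ad\psi}\hat T_\tau$ sending $v_{ij}\mapsto u_{ij}u_{\chi_\nat}$ and $u_\chi\mapsto u_\chi$ is well-defined because the derivation above showed that these images satisfy \eqref{eq:SUqtN-rel-1}--\eqref{eq:SUqtN-rel-4}, while the commutation with $u_\chi$ follows from \eqref{eaction} together with the fact that $u_\chi$ commutes with $u_{\chi_\nat}$. For the reverse direction, set $w_{ij}=v_{ij}u_{\chi_\nat}^{-1}\in\tilde A$. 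Reordering with the rule $u_{\chi_\nat}^{-1}v_{kl}=f(\chi_\nat,k,l)^{-1}v_{kl}u_{\chi_\nat}^{-1}$ shows that $w_{ij}$ satisfies \eqref{eq:SUqN-rel-1}--\eqref{eq:SUqN-rel-2}; for the quantum determinant, in every term of $\qdet((w_{ij}))$ the accumulated factor of $u_{\chi_\nat}^{-1}$ equals $u_{\chi_\nat}^{-n}=u_{\chi_\nat^n}=1$ (since $\tau_i^n=1$ makes $\chi_\nat^n$ trivial on $T_\tau$), and the accumulated scalar produced by the reordering matches exactly $\tau^{m(\sigma)}$; relation \eqref{eq:SUqtN-rel-4} thus becomes $\qdet((w_{ij}))=1$. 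Combined with $u_\chi w_{ij}u_\chi^{-1}=f(\chi,i,j)w_{ij}$, this yields a homomorphism $\Psi\colon\C[\SU_q(n)]\rtimes_{\Ad\psi}\hat T_\tau\to\tilde A$. Since $\Phi$ and $\Psi$ are inverse to each other on generators, they are mutually inverse isomorphisms.

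Finally, the composition $A\hookrightarrow\tilde A\xrightarrow{\Phi}\C[\SU_q(n)]\rtimes_{\Ad\psi}\hat T_\tau$ is injective with image the subalgebra generated by $u_{ij}u_{\chi_\nat}$, which equals $\C[\SU^\tau_q(n)]$ by Theorem~\ref{thm:fixed-pt-in-crossed-prod}. Since this composition coincides with $\pi$ followed by the embedding $\C[\SU^\tau_q(n)]\hookrightarrow\C[\SU_q(n)]\rtimes_{\Ad\psi}\hat T_\tau$, it follows that $\pi$ is injective. The main technical obstacle is the bookkeeping for the quantum determinant: the multi-index $m(\sigma)$ is tailor-made for tracking the scalars accumulated when reordering the $u_{\chi_\nat}^{-1}$'s past each $v_{k\sigma(k)}$, and the verification, while routine, requires care to avoid sign and index errors.
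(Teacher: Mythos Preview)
Your proof is correct and follows essentially the same approach as the paper: put the $\hat T_\tau$-action \eqref{eaction} on the universal algebra $A$, form the crossed product, and observe that the elements $v_{ij}u_{\chi_\nat}^{-1}$ satisfy the defining relations of $\C[\SU_q(n)]$, which yields the needed homomorphism. The only difference is organisational: the paper builds just the one map $\Psi\colon\C[\SU_q(n)]\rtimes\hat T_\tau\to A\rtimes\hat T_\tau$ and restricts it to obtain a left inverse $\C[\SU^\tau_q(n)]\to A$ of $\pi$, whereas you go further and exhibit $\Phi$ and $\Psi$ as mutually inverse isomorphisms before reading off injectivity of $\pi$; your extra verifications (that the action preserves the relations, that the determinant bookkeeping works out) are things the paper asserts without writing out.
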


\bp We already know that relations \eqref{eq:SUqtN-rel-1}-\eqref{eq:SUqtN-rel-4} are satisfied in $\C[\SU^\tau_q(n)]$, so we just have to show that there are no other relations. Let $\A$ be a universal algebra generated by elements $w_{ij}$ satisfying relations \eqref{eq:SUqtN-rel-1}-\eqref{eq:SUqtN-rel-4}. We can define an action of $\hat T_\tau$ on $\A$ by~\eqref{eaction}. Then in $\A\rtimes\hat T_\tau$ the elements $w_{ij}u_{\chi_\nat}^{-1}$ satisfy the defining relations of $\C[\SU_q(n)]$, so we have a homomorphism $\C[\SU_q(n)]\to \A\rtimes\hat T_\tau$ mapping $u_{ij}$ into $w_{ij}u_{\chi_\nat}^{-1}$. It extends to a homomorphism $\C[\SU_q(n)]\rtimes\hat T_\tau\to \A\rtimes\hat T_\tau$ that is identity on the group algebra of $\hat T_\tau$. Restricting to $\C[\SU^\tau_q(n)]\subset \C[\SU_q(n)]\rtimes\hat T_\tau$, we get a homomorphism $\C[\SU^\tau_q(n)]\to\A$ mapping $v_{ij}$ into $w_{ij}$.
\ep

The involution on $\C[\SU^\tau_q(n)]$ is determined by requiring the invertible matrix $(v_{ij})_{i,j}$ to be unitary. An explicit formula can be easily found using that for $\C[\SU_q(n)]$.

\begin{remk}
The relations in $\C[\SU^\tau_q(n)]$ cannot be obtained using the FRT-approach, since the categories $\Rep(\SU_q(n))^\zeta$ are typically not braided. More precisely, $\Rep(\SU_q(n))^\zeta$ has a braiding  if and only if either $\zeta=1$ or $n$ is even and $\zeta=-1$. This statement is already implicit in~\cite{MR1237835}, and it can be proved as follows. If $\zeta=1$ or $n$ is even and $\zeta=-1$, then a braiding indeed exists, see e.g.~\cite{MR2307417}. Conversely, suppose we have a braiding. In other words, there exists an $R$-matrix $\RR$ for $(\Uni(\SU_q(n)),\Dhat_q,\Phi)$, where $\Phi=\zeta^{\omega_n(|\lambda|,|\eta|)|\nu|}$. Recall that this means that $\RR$ is an invertible element in $\Uni(\SU_q(n)\times\SU_q(n))$ such that
$\Dhat^{\rm op}_q =\RR\Dhat_q(\cdot )\RR^{-1}$ and
\begin{equation*}
\label{equasi1}
(\Dhat_q\otimes\iota)(\RR)=\Phi_{312}\RR_{13}\Phi_{132}^{-1}
\RR_{23}\Phi, \ \
(\iota\otimes\Dhat_q)(\RR)=\Phi_{231}^{-1}\RR_{13}\Phi_{213}
\RR_{12}\Phi^{-1}.
\end{equation*} 
Since $\Phi$ is central and symmetric in the first two variables, the last two identities can be written as
$$
(\Dhat_q^{\rm op}\otimes\iota)(\RR)=\RR_{23}\RR_{13}\Phi, \ \
(\iota\otimes\Dhat_q)(\RR)=\RR_{13}\RR_{12}\Phi_{321}^{-1}.
$$
On the other hand, we know that $\Rep(\SU_q(n))$ is braided, so there exists an element $\RR_q$ satisfying the above properties with $\Phi$ replaced by $1$. Consider the element $F=\RR_q^{-1}\RR$. Then $F$ is invariant, meaning that it commutes with the image of $\Dhat_q$. Furthermore, we have
\begin{align*}
(F\otimes1)(\Dhat_q\otimes\iota)(F)
&=(\RR_q^{-1}\otimes1)(\Dhat_q^{\rm op}\otimes\iota)(\RR_q^{-1})(\Dhat_q^{\rm op}\otimes\iota)(\RR)(\RR\otimes1)\\
&=((\RR_q)_{23}(\RR_q)_{13}(\RR_q)_{12})^{-1}\RR_{23}\RR_{13}\RR_{12}\Phi,
\end{align*}
and similarly
\begin{align*}
(1\otimes F)(\iota\otimes\Dhat_q)(F)&=(\iota\otimes\Dhat_q)(\RR_q^{-1}) (1\otimes\RR_q^{-1}) (1\otimes\RR)(\iota\otimes\Dhat_q)(\RR)\\
&=((\RR_q)_{23}(\RR_q)_{13}(\RR_q)_{12})^{-1}\RR_{23}\RR_{13}\RR_{12}\Phi_{321}^{-1}.
\end{align*}
Therefore
$$
(\iota \otimes \hat{\Delta}_q)(F^{-1}) (1 \otimes F^{-1}) (F \otimes 1) (\hat{\Delta}_q \otimes \iota)(F)=\Phi_{321}\Phi.
$$
This implies that $\Rep(\SU_q(n))$ is monoidally equivalent to $\Rep(\SU_q(n))^{\Phi_{321}\Phi}$. Since the cocycle $\Phi_{321}\Phi$ on the dual of the center is cohomologous to the cocycle $\zeta^{2\omega_n(|\lambda|,|\eta|)|\nu|}$, this means that $\Rep(\SU_q(n))$ is monoidally equivalent to $\Rep(\SU_q(n))^{\zeta^2}$. By the Kazhdan--Wenzl classification this is the case only if $\zeta^2=1$.
\end{remk}

\bigskip

\appendix

\section{Cocycles on abelian groups}
\label{sec:cocycl-abel-groups}

Let $\Gamma$ be a discrete abelian group.  As is common in operator algebra, we denote the generators of the group algebra $\Z[\Gamma]$ by $\lambda_\gamma$ ($\gamma\in\Gamma$). Let $(C_*(\Gamma),d)$ be the nonnormalized bar-resolution of the $\Z[\Gamma]$-module $\Z$, so $C_n(\Gamma)$ ($n\ge0$) is the free $\Z[\Gamma]$-module with basis consisting of $n$-tuples of elements in $\Gamma$, written as $[\gamma_1|\dots|\gamma_n]$, and the differential $d\colon C_n(\Gamma)\to C_{n-1}(\Gamma)$ is defined by
$$
d[\gamma_1|\dots|\gamma_n]=\lambda_{\gamma_1}[\gamma_2|\dots|\gamma_n]+\sum^{n-1}_{i=1}(-1)^{i}
[\gamma_1|\dots|\gamma_i+\gamma_{i+1}|\dots|\gamma_n]+(-1)^n[\gamma_1|\dots|\gamma_{n-1}].
$$

Let $M$ be a commutative group endowed with the trivial $\Gamma$-module structure.  The group cohomology $H^*(\Gamma; M)$ can be computed from the standard complex induced by the bar-resolution.  Concretely, we have a cochain complex
\[
C^*(\Gamma; M) = \Hom_{\Z[\Gamma]}(C_*(\Gamma), M) = \Map(\Gamma^*, M),
\]
endowed with the boundary map $\partial\colon C^n(\Gamma;M) \rightarrow C^{n + 1}(\Gamma;M)$ defined by
\begin{multline*}\label{eq:grp-chm-std-cplx}
(\partial\phi)(\gamma_1, \ldots, \gamma_{n + 1}) = \phi(\gamma_2, \ldots, \gamma_{n + 1}) - \phi(\gamma_1 + \gamma_2, \gamma_3, \ldots, \gamma_{n + 1}) +\dots\\
+ (-1)^n \phi(\gamma_1, \ldots, \gamma_{n - 1}, \gamma_n +\gamma_{n + 1}) + (-1)^{n + 1} \phi(\gamma_1, \ldots, \gamma_n).
\end{multline*}
By $M$-valued cocycles on $\Gamma$ we mean cocycles in $(C^*(\Gamma;M),\partial)$. We will consider only $\T$-valued cocycles, but with minor modifications everything what we say remains true for cocycles with values in any divisible group $M$.

For the sake of computation, it is also convenient to introduce the integer homology $H_*(\Gamma)=H_*(\Gamma;\Z)$, which is given as the homology of the complex $C_*(\Gamma;\Z)=\Z\otimes_{\Z[\Gamma]}C_*(\Gamma)$.  Since the action of $\Gamma$ on $\T$ is trivial, we have $C^*(\Gamma;\T)=\Hom_{\Z[\Gamma]}(C_*(\Gamma),\T)=\Hom(C_*(\Gamma;\Z),\T)$.  Moreover, the injectivity of $\T$ as a $\Z$-module implies that any character of $H_n(\Gamma; \Z)$ can be lifted to a character of~$C_n(\Gamma;\Z)$.  It follows that the groups $H^n(\Gamma; \T)$ and $H_n(\Gamma)$ are Pontryagin dual to each other. This is a particular case of the Universal Coefficient Theorem.

A map $\phi\colon\Gamma^n\to\T$ ($n\ge1$) is called an $n$-character on $\Gamma$ if it is a character in every variable, so it is defined by a character on $\Gamma^{\otimes n}$ (unless specified otherwise, all tensor products in this appendix are over $\Z$). It is easy to see that every $n$-character is a $\T$-valued cocycle. An $n$-character $\phi$ is called alternating if $\phi(\gamma_1,\dots,\gamma_n)=1$ as long as $\gamma_i=\gamma_{i+1}$ for some $i$; then $\phi(\gamma_{\sigma(1)},\dots,\gamma_{\sigma(n)})=\phi(\gamma_1,\dots,\gamma_n)^{\sgn(\sigma)}$ for any $\sigma\in S_n$. In other words, a $n$-character is alternating if it factors through the exterior power $\bigwedge^n\Gamma$, which is the quotient of $\Gamma^{\otimes n}$ by the subgroup generated by elements $\gamma_1\otimes\dots\otimes\gamma_n$ such that $\gamma_i=\gamma_{i+1}$ for some $i$. It will sometimes be convenient to view $\bigwedge^n\Gamma$ as a subgroup of $\Gamma^{\otimes n}$ via the embedding
$$
\gamma_1\wedge\dots\wedge\gamma_n\mapsto\sum_{\sigma\in S_n}\sgn(\sigma)\gamma_{\sigma(1)}\otimes\dots\otimes\gamma_{\sigma(n)}.
$$

We will also consider $\bigwedge^n\Gamma$ as a subgroup of $H_n(\Gamma)$. The embedding $\bigwedge^*\Gamma\hookrightarrow H_*(\Gamma)$ is constructed using the canonical isomorphism $\Gamma\cong H_1(\Gamma)$ and the Pontryagin product on $H_*(\Gamma)$, see \cite{MR1324339}*{Theorem~V.6.4}. On the chain level the latter product can be defined using the shuffle product, so that $\gamma_1\wedge\dots\wedge\gamma_n$ is identified with the homology class of the cycle
$$
\sum_{\sigma\in S_n}\sgn(\sigma)(1\otimes[\gamma_{\sigma(1)}|\dots|\gamma_{\sigma(n)}])\in C_n(\Gamma;\Z).
$$

For free abelian groups we have $\bigwedge^*\Gamma=H_*(\Gamma)$. By duality we get the following description of cocycles.

\begin{prop} \label{pabelian}
If $\Gamma$ is free abelian, then for every $n\ge 1$ we have:
\enu{i} any $\T$-valued $n$-cocycle on $\Gamma$ is cohomologous to an alternating $n$-character;
\enu{ii} an $n$-character is a coboundary if and only if it vanishes on $\bigwedge^n\Gamma\subset\Gamma^{\otimes n}$; in particular, an alternating $n$-character is a coboundary if and only its order divides~$n!$.
\end{prop}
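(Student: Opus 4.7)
The plan is to deduce both parts from the universal coefficient identification $H^n(\Gamma;\T) \cong \Hom(H_n(\Gamma),\T) = \Hom(\bigwedge^n\Gamma,\T)$ recalled above, by computing the cohomology class of an arbitrary $n$-character explicitly. First, any $n$-character $\phi\colon \Gamma^{\otimes n}\to\T$ is automatically a cocycle: since $\phi$ is multilinear and the $\Gamma$-action on $\T$ is trivial, all terms of $\partial\phi$ cancel. To identify the class of $\phi$, I would evaluate it on the cycle $\sum_{\sigma\in S_n}\sgn(\sigma)[\gamma_{\sigma(1)}|\cdots|\gamma_{\sigma(n)}]$ representing $\gamma_1\wedge\cdots\wedge\gamma_n\in H_n(\Gamma)$; the answer is the restriction of $\phi$ along the antisymmetrization embedding $\iota\colon \bigwedge^n\Gamma\hookrightarrow\Gamma^{\otimes n}$ from the appendix.

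I would then specialize to alternating $\phi$. By definition, such a $\phi$ factors as $\phi = \bar\phi\circ\pi$ through the quotient $\pi\colon \Gamma^{\otimes n}\to\bigwedge^n\Gamma$. Since $\pi\circ\iota$ is multiplication by $n!$ on $\bigwedge^n\Gamma$, the cohomology class of $\phi$ corresponds to $\bar\phi^{n!}$ under the UCT identification. Hence, sending an alternating $n$-character $\bar\phi\in\Hom(\bigwedge^n\Gamma,\T)$ to its cohomology class in $H^n(\Gamma;\T)\cong\Hom(\bigwedge^n\Gamma,\T)$ is exactly the $n!$-th power endomorphism. For $\Gamma$ free abelian, $\bigwedge^n\Gamma$ is itself free abelian, so $\Hom(\bigwedge^n\Gamma,\T)$ is a product of copies of $\T$ and therefore divisible; in particular the $n!$-th power map is surjective. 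This proves (i).

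For (ii), the same calculation shows that an arbitrary $n$-character $\phi$ is a coboundary if and only if its restriction $\phi\circ\iota$ to $\iota(\bigwedge^n\Gamma)\subset\Gamma^{\otimes n}$ is trivial. Specializing to $\phi = \bar\phi\circ\pi$ alternating, this becomes $\bar\phi^{n!} = 1$, i.e., the order of $\bar\phi$ divides $n!$.

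The argument rests on the chain-level identification of $\gamma_1\wedge\cdots\wedge\gamma_n$ with the antisymmetrization cycle, already set up in the appendix via the Pontryagin/shuffle product; once this is in place, everything reduces to the elementary identity $\pi\circ\iota = n!$ together with divisibility. I do not anticipate a real obstacle: the one point that must be kept straight is the distinction between the subgroup and quotient incarnations of $\bigwedge^n\Gamma$ inside $\Gamma^{\otimes n}$, which differ by the factor $n!$ and are precisely the source of the ``order divides $n!$'' condition in (ii).
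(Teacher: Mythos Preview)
Your proof is correct and follows essentially the same route as the paper: both use the UCT identification $H^n(\Gamma;\T)\cong\Hom(\bigwedge^n\Gamma,\T)$, evaluate an $n$-character on the antisymmetrization cycle to see that its class is $\phi\circ\iota$, and then for (i) take an $n!$-th root in $\Hom(\bigwedge^n\Gamma,\T)$ using divisibility of $\T$. Your explicit mention of the identity $\pi\circ\iota=n!$ and of the divisibility step just makes visible what the paper leaves implicit.
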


\bp The value of an $n$-cocycle $\phi$ on $\gamma_1\wedge\dots\wedge\gamma_n\in H_n(\Gamma)$ is
$$
\pairing{\phi}{\gamma_1\wedge\dots\wedge\gamma_n}=\prod_{\sigma\in S_n}\phi(\gamma_{\sigma(1)},\dots,\gamma_{\sigma(n)})^{\sgn(\sigma)}.
$$
This immediately implies (ii), since if $\phi$ is an $n$-character, then the above product is exactly the value of $\phi$ on $\gamma_1\wedge\dots\wedge\gamma_n$ considered as an element of $\Gamma^{\otimes n}$.

Turning to (i), assume $\psi$ is an $n$-cocycle. It defines a character $\chi$ of $H_n(\Gamma)=\bigwedge^n\Gamma$. Let $\phi$ be a character of $\bigwedge^n\Gamma$ such that $\phi^{n!}=\chi$. Then $\phi$ is an alternating $n$-character, and $\phi$ is cohomologous to~$\psi$, since both cocycles $\phi$ and $\psi$ define the same character $\chi$ of $H_n(\Gamma)=\bigwedge^n\Gamma$.
\ep

We now turn to the more complicated case of finite abelian groups and concentrate on $3$-cocycles. In this case $\bigwedge^3\Gamma$ is a proper subgroup of $H_3(\Gamma)$: as follows from Proposition~\ref{p3cocycles} below, the quotient $H_3(\Gamma)/\bigwedge^3\Gamma$ is (noncanonically) isomorphic to $\Gamma\oplus(\Gamma\bigwedge\Gamma)$. Correspondingly, not every third cohomology class can be represented by a $3$-character. Additional $3$-cocycles can be obtained by the following construction.

\begin{lem} \label{lconstruction}
Assume $\Gamma=\Gamma_1/\Gamma_0$ for some abelian groups $\Gamma_1$ and $\Gamma_0$. Suppose $f\colon\Gamma_1\times\Gamma_1\to\T$ is a function such that
$$
f(\alpha,\beta+\gamma)=f(\alpha,\beta)\ \ \text{and}\ \ f(\alpha+\gamma,\beta)=\chi(\gamma\otimes\beta)f(\alpha,\beta)
$$
for all $\alpha,\beta\in\Gamma_1$ and $\gamma\in\Gamma_0$, where $\chi$ is a character of $\Gamma_0\otimes\Gamma$. Then the function
$$
(\partial f)(\alpha,\beta,\gamma)=f(\beta,\gamma)f(\alpha+\beta,\gamma)^{-1}f(\alpha,\beta+\gamma)f(\alpha,\beta)^{-1}
$$
on $\Gamma_1^3$ is $\Gamma^3_0$-invariant,  hence it defines a $\T$-valued $3$-cocycle on $\Gamma$.
\end{lem}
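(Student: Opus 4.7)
The plan is two-step: (a) verify directly that $\partial f$ is $\Gamma_0^3$-invariant, one variable at a time, and (b) observe that once invariance is known, the $3$-cocycle identity on $\Gamma$ is automatic, because $\partial f$ is by construction a coboundary on $\Gamma_1$ (hence a $3$-cocycle there), and the coboundary operator is compatible with the quotient map $\Gamma_1\to\Gamma$.

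First I would check the easy variables. Invariance in the third variable uses only the first relation on $f$: substituting $\gamma+\gamma_0$ for $\gamma$ introduces an additional $\Gamma_0$-shift only in second slots of $f$, so each of $f(\beta,\gamma)$, $f(\alpha+\beta,\gamma)^{-1}$, $f(\alpha,\beta+\gamma)$ is unchanged, while $f(\alpha,\beta)^{-1}$ does not involve $\gamma$. Invariance in the second variable uses both relations: the factors whose second slot is $\beta$ are unaffected by the shift $\beta\mapsto\beta+\gamma_0$, while $f(\beta+\gamma_0,\gamma)=\chi(\gamma_0\otimes\bar\gamma)f(\beta,\gamma)$ and $f(\alpha+\beta+\gamma_0,\gamma)^{-1}=\chi(\gamma_0\otimes\bar\gamma)^{-1}f(\alpha+\beta,\gamma)^{-1}$ contribute cancelling characters.

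The first variable is the one calculation worth pinning down. Replacing $\alpha$ by $\alpha+\gamma_0$, the factor $f(\beta,\gamma)$ is untouched, while the remaining three factors pick up the characters
$$
\chi(\gamma_0\otimes\bar\gamma)^{-1},\quad \chi(\gamma_0\otimes(\bar\beta+\bar\gamma)),\quad \chi(\gamma_0\otimes\bar\beta)^{-1},
$$
where bars denote reduction modulo $\Gamma_0$. By bilinearity of $\chi$ on $\Gamma_0\otimes\Gamma$, their product is $1$, so $\partial f$ is invariant under translation of $\alpha$ by $\Gamma_0$.

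Once $\Gamma_0^3$-invariance is established, $\partial f$ descends to a map $\overline{\partial f}\colon\Gamma^3\to\T$. The identity $\partial(\partial f)=0$ holds on $\Gamma_1^4$ by construction, and since $\partial$ commutes with the quotient map, it passes to $\partial\overline{\partial f}=0$ on $\Gamma^4$, which is the desired $3$-cocycle condition. I do not foresee a real obstacle; the only subtlety is bookkeeping around the fact that $\chi$ is defined on $\Gamma_0\otimes\Gamma$ (not $\Gamma_0\otimes\Gamma_1$), so elements of $\Gamma_1$ appearing in the second slot of $\chi$ must be reduced modulo $\Gamma_0$ first. The first relation on $f$ guarantees this reduction is legitimate and that the resulting formulas are consistent.
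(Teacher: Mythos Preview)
Your proof is correct and is exactly the ``straightforward computation'' the paper alludes to without writing out; you have simply made explicit the three translation checks and the coboundary-hence-cocycle argument that the authors leave to the reader.
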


\bp This is a straightforward computation.
\ep

In order to describe explicitly generators of $H^3(\Gamma;\T)$, let us introduce some notation. For natural numbers $n_1,\dots,n_k$, denote by $(n_1,\dots,n_k)$ their greatest common divisor. For $n\in\N$, denote by~$\chi_n$ the character of $\Z/n\Z$ defined by $\chi_n(1)=e^{2\pi i/n}$. Finally, for integers~$a$ and~$b$ and a natural number~$n$, put
$$
\omega_n(a,b)=\biggl\lfloor\frac{a+b}{n}\biggr\rfloor
-\biggl\lfloor\frac{a}{n}\biggr\rfloor-\biggl\lfloor\frac{b}{n}\biggr\rfloor.
$$
Note that $\omega_n$ is a well-defined function on $\Z/n\Z\times\Z/n\Z$ with values $0$ or $1$.

\begin{prop} \label{p3cocycles}
Assume $\Gamma=\oplus^m_{i=1}\Z/n_i\Z$ for some $n_i\ge1$. Then
$$
H^3(\Gamma;\T)\cong\bigoplus_{i}\Z/n_i\Z\oplus\bigoplus_{i<j}\Z/(n_i,n_j)\Z\oplus\bigoplus_{i<j<k}\Z/(n_i,n_j,n_k)\Z.
$$
Explicitly, generators $\phi_i$ of $\Z/n_i\Z$, $\phi_{ij}$ of $\Z/(n_i,n_j)\Z$ and $\phi_{ijk}$ of $\Z/(n_i,n_j,n_k)\Z$ can be defined by
\begin{align*}
\phi_i(a,b,c)&=\chi_{n_i}(\omega_{n_i}(a_i,b_i)c_i),&
\phi_{ij}(a,b,c)&=\chi_{n_j}(\omega_{n_i}(a_i,b_i)c_j),&
\phi_{ijk}(a,b,c)&=\chi_{(n_i,n_j,n_k)}(a_ib_jc_k).
\end{align*}
\end{prop}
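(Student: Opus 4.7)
The plan is to combine the Universal Coefficient isomorphism $H^3(\Gamma;\T)\cong\Hom(H_3(\Gamma),\T)$ already recorded in the appendix with an iterated Künneth computation of $H_3(\Gamma;\Z)$, and then to realise each abstract summand by one of the explicit cocycles.

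First I would recall the cyclic case. From the standard periodic resolution $\cdots\to\Z[\Z/n\Z]\xrightarrow{N}\Z[\Z/n\Z]\xrightarrow{\lambda_1-1}\Z[\Z/n\Z]\to\Z$ one reads off $H_1(\Z/n\Z)=H_3(\Z/n\Z)=\Z/n\Z$ and $H_2(\Z/n\Z)=0$. Iterating the Künneth formula across the direct sum decomposition of $\Gamma=\oplus_{i=1}^m\Z/n_i\Z$, the vanishing of $H_2$ kills every contribution to $H_3(\Gamma)$ except the three expected types: a summand $H_3(\Z/n_i\Z)=\Z/n_i\Z$ for each $i$; a Tor correction $\Tor(H_1(\Z/n_i\Z),H_1(\Z/n_j\Z))=\Z/(n_i,n_j)\Z$ for each pair $i<j$; and a triple tensor $H_1(\Z/n_i\Z)\otimes H_1(\Z/n_j\Z)\otimes H_1(\Z/n_k\Z)=\Z/(n_i,n_j,n_k)\Z$ for each triple $i<j<k$. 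Dualising yields the abstract isomorphism claimed.

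To identify the explicit generators I would argue as follows. Each $\phi_i$ is the pullback along the projection $\Gamma\to\Z/n_i\Z$ of a generator of $H^3(\Z/n_i\Z;\T)$, which in turn is produced by Lemma~\ref{lconstruction} applied to the extension $n_i\Z\hookrightarrow\Z\twoheadrightarrow\Z/n_i\Z$ with $f(a,b)=\chi_{n_i}(\lfloor a/n_i\rfloor b)$; computing $\partial f$ recovers $\phi_i$ on the nose. The class $\phi_{ij}$ arises from the same extension in the $i$-th coordinate, but now paired with the character of $n_i\Z\otimes(\Z/n_i\Z\oplus\Z/n_j\Z)$ sending $n_i\otimes c$ to $\chi_{n_j}(c_j)$; the resulting $3$-cocycle coincides with $\phi_{ij}$ and its pairing with the generator of $\Tor(\Z/n_i\Z,\Z/n_j\Z)\subset H_3(\Gamma)$ is a primitive $(n_i,n_j)$-th root of unity. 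Finally $\phi_{ijk}$ is a genuine $3$-character, so it factors through $(\Z/n_i\Z)\otimes(\Z/n_j\Z)\otimes(\Z/n_k\Z)=\Z/(n_i,n_j,n_k)\Z$; by the shuffle description of the Pontryagin product recalled in the appendix, its value on $e_i\wedge e_j\wedge e_k\in\bigwedge^3\Gamma\subset H_3(\Gamma)$ is a primitive $(n_i,n_j,n_k)$-th root of unity.

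The main obstacle will be matching the explicit cocycles with the correct Künneth summands and confirming that together they generate $H^3(\Gamma;\T)$ rather than a proper subgroup. The most delicate point is the Tor contribution $\phi_{ij}$, whose class must pair trivially with the $\bigwedge^3\Gamma$-summand and with the other Tor summands; this I would check by choosing explicit cycle representatives for each Künneth piece (shuffle products for the tensor pieces, Bockstein lifts for the Tor pieces) and computing pairings directly. Once the orders of $\phi_i$, $\phi_{ij}$, $\phi_{ijk}$ in $H^3$ are shown to be exactly $n_i$, $(n_i,n_j)$, $(n_i,n_j,n_k)$, a counting argument against the total order of $H^3(\Gamma;\T)$ already determined by the Künneth step forces the images of the $\phi$'s to form a full system of independent generators, which completes the proposition.
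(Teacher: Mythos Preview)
Your proposal is correct and follows essentially the same route as the paper: compute $H_3(\Gamma)$ via the periodic resolution for cyclic groups and an iterated K\"unneth formula, dualise via $H^3(\Gamma;\T)\cong\Hom(H_3(\Gamma),\T)$, verify that the $\phi_i,\phi_{ij}$ arise from Lemma~\ref{lconstruction} (your $f$ differs from the paper's by a sign, giving $\phi_i^{-1}$ rather than $\phi_i$, which is harmless), and then build explicit cycles $\theta_i,\theta_{ij},\theta_{ijk}$ in $C_3(\Gamma;\Z)$ realising each K\"unneth summand and compute the pairing matrix. One small caution: your closing sentence suggests that knowing the individual orders of the $\phi$'s together with $|H^3(\Gamma;\T)|$ forces independence, but that is false in general; what actually gives independence is the diagonal pairing you already proposed to verify, and the paper makes exactly this point by exhibiting $\langle\phi_\alpha,\theta_\beta\rangle$ as trivial off the diagonal and a primitive root on it.
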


\bp Recall first how to compute the homology of finite cyclic groups. Consider the group $\Z/n\Z$. Then there is a free resolution $(P_*,d)$ of the $\Z[\Z/n\Z]$-module~$\Z$ such that $P_k$ is generated by one basis element $e_k$, and
$$
de_{2k+1}=\lambda_1 e_{2k}-e_{2k}\ \ \text{and}\ \ de_{2k+2}=\sum_{a\in\Z/n\Z}\lambda_ae_{2k+1}\ \ \text{for}\ \ k\ge0.
$$
The morphism $P_0\to\Z$ is given by $e_0\mapsto1$. Using this resolution we get
$$
H_{2k+1}(\Z/n\Z)\cong\Z/n\Z\ \ \text{and}\ \ H_{2k+2}(\Z/n\Z)=0 \ \ \text{for}\ \ k\ge0.
$$

\smallskip

Turning to the proof of the proposition, the first statement is equivalent to
$$
H_3(\Gamma)\cong\bigoplus_{i}\Z/n_i\Z\oplus\bigoplus_{i<j}\Z/(n_i,n_j)\Z\oplus\bigoplus_{i<j<k}\Z/(n_i,n_j,n_k)\Z.
$$
This, in turn, is proved by induction on $m$ using the isomorphisms
$$
H_1(\Gamma)\cong\Gamma, \ \ H_2(\Gamma)\cong\Gamma\bigwedge\Gamma,
$$
which are valid for any abelian group $\Gamma$, and the K\"unneth formula, which gives that $H_3(\Gamma\oplus\Z/n\Z)$ is isomorphic to
$$
H_3(\Gamma)\oplus (H_2(\Gamma)\otimes H_1(\Z/n\Z))\oplus H_3(\Z/n\Z)\oplus\Tor^\Z_1(H_1(\Gamma),H_1(\Z/n\Z)).
$$
Note only that
$$
\Tor^\Z_1(\Z/k\Z,\Z/n\Z)\cong\Z/(k,n)\Z\cong\Z/k\Z\otimes\Z/n\Z.
$$

\smallskip

Let us check next that the functions $\phi_i$, $\phi_{ij}$ and $\phi_{ijk}$ are indeed $3$-cocycles. For $\phi_{ijk}$ this is clear, since it is a $3$-character. Concerning $\phi_{i}$, consider the function
$$
f_i(a,b)=\chi_{n_i}\left(-\left\lfloor\frac{a_i}{n_i}\right\rfloor b_i\right)
$$
on $\Z^m\times\Z^m$. It is of the type described in Lemma~\ref{lconstruction} for $\Gamma_1=\Z^m$ and $\Gamma_0=\oplus^m_{i=1}n_i\Z$, so
$\phi_i(a,b,c)=(\partial f_i)(a,b,c)$ is a $3$-cocycle on $\Gamma$. Similarly, consider the function
$$
f_{ij}(a,b)=\chi_{n_j}\left(-\left\lfloor\frac{a_i}{n_i}\right\rfloor b_j\right).
$$
It is again of the type described in Lemma~\ref{lconstruction}, so $\phi_{ij}=\partial f_{ij}$ is a $3$-cocycle.

\smallskip

Our next goal is to construct a `dual basis' in $H_3(\Gamma)$. Let $u_i$ be the generator $1\in\Z/n_i\Z\subset\Gamma$. Denote by $\theta_{ijk}$ the cycle representing $u_i\wedge u_j\wedge u_k\in\bigwedge^3\Gamma\subset H_3(\Gamma)$ obtained by the  shuffle product, so
$$
\theta_{ijk}=\sum_{\sigma\in S_3}\sgn(\sigma)(1\otimes[u_{\sigma(i)}|u_{\sigma(j)}|u_{\sigma(k)}]),
$$
where we consider $S_3$ as the group of permutations of $\{i,j,k\}$.

Consider the $\Z[\Z/n_i\Z]$-resolution $(P^i_*,d)$ of $\Z$ described at the beginning of the proof. Let $e^i_n$ be the basis element of $P^i_n$. We have a chain map $P^i_*\to C_*(\Z/n_i\Z)$ of resolutions of $\Z$ defined~by
\begin{equation}\label{echain}
e^i_0\mapsto[\emptyset],\ \ e^i_1\mapsto[1],\ \ e^i_2\mapsto\sum_{a\in\Z/n_i\Z}[a|1],\ \ e^i_3\mapsto\sum_{a\in\Z/n_i\Z}[1|a|1],....
\end{equation}
It follows that we have a $3$-cycle $\theta_i\in C_3(\Gamma;\Z)$ defined by
$$
\theta_i=\sum^{n_i-1}_{a=0}1\otimes[u_i|au_i|u_i].
$$

Finally, consider the $\Z[\Z/n_i\Z\oplus\Z/n_j\Z]$-resolution $P_*^i\otimes P_*^j$ of $\Z$. Using this resolution we get a third homology class represented by
$$
\frac{n_j}{(n_i,n_j)}1\otimes e^i_2\otimes e^j_1+\frac{n_i}{(n_i,n_j)}1\otimes e^i_1\otimes e^j_2.
$$
A chain map between the resolutions $P^i_*\otimes P^j_*$ and $C_*(\Z/n_i\Z\oplus\Z/n_j\Z)$ can be defined by the tensor product of the chain maps \eqref{echain} and the shuffle product. This gives us a $3$-cycle $\theta_{ij}\in C_3(\Gamma;\Z)$. Explicitly,
\begin{multline*}
\theta_{ij}=\frac{n_j}{(n_i,n_j)}\sum^{n_i-1}_{a=0}1\otimes([au_i|u_i|u_j]-[au_i|u_j|u_i]+[u_j|au_i|u_i])\\
+\frac{n_i}{(n_i,n_j)}\sum^{n_j-1}_{b=0}1\otimes([u_i|bu_j|u_j]-[bu_j|u_i|u_j]+[bu_j|u_j|u_i]).
\end{multline*}

The only nontrivial pairings between the cocycles $\phi_i$, $\phi_{ij}$, $\phi_{ijk}$ and the cycles $\theta_i$, $\theta_{ij}$, $\theta_{ijk}$ are
$$
\pairing{\phi_i}{\theta_i}=\zeta_{n_i}, \ \ \pairing{\phi_{ij}}{\theta_{ij}}=\zeta_{n_j}^{{n_j}/{(n_i,n_j)}}=\zeta_{(n_i,n_j)},\ \ \pairing{\phi_{ijk}}{\theta_{ijk}}=\zeta_{(n_i,n_j,n_k)},
$$
where $\zeta_n=e^{2\pi i/n}$. This implies that these cocycles and cycles are the required generators of the Pontryagin dual groups $H^3(\Gamma;\T)$ and $H_3(\Gamma)$.
\ep

\begin{cor}\label{ccmain}
Assume $\Gamma$ is a finite abelian group. Write $\Gamma$ as $\Gamma_1/\Gamma_0$ for a finite rank free abelian group $\Gamma_1$. Then for any $\T$-valued $3$-cocycle $\phi$ on $\Gamma$ the following conditions are equivalent:
\enu{i} $\phi$ vanishes on $\bigwedge^3\Gamma\subset H_3(\Gamma)$;
\enu{ii} $\phi$ lifts to a coboundary on $\Gamma_1$;
\enu{iii} $\phi=\partial f$ for a function $f\colon\Gamma_1\times\Gamma_1\to\T$ as in Lemma~\ref{lconstruction}.
\end{cor}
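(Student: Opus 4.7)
The implication (iii)$\Rightarrow$(ii) is immediate, since any $f$ as in Lemma~\ref{lconstruction} manifestly realises the pullback of $\phi$ to $\Gamma_1^3$ as a coboundary. For (ii)$\Rightarrow$(i) the key input is the identification $H_3(\Gamma_1)=\bigwedge^3\Gamma_1$, available because $\Gamma_1$ is free abelian of finite rank. A $3$-cocycle on $\Gamma_1$ is a coboundary iff it vanishes on this exterior cube, and since
$$\pairing{\pi^*\phi}{\tilde\alpha\wedge\tilde\beta\wedge\tilde\gamma}=\pairing{\phi}{\pi(\tilde\alpha)\wedge\pi(\tilde\beta)\wedge\pi(\tilde\gamma)}$$
and $\pi\colon\Gamma_1\to\Gamma$ is surjective, this translates to $\phi$ vanishing on $\bigwedge^3\Gamma\subset H_3(\Gamma)$, which is (i).

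For the hard direction (i)$\Rightarrow$(iii), I would reduce to the standard presentation via Smith normal form. Since $\Gamma$ is finite, $\Gamma_0$ is free of the same rank $m$ as $\Gamma_1$, and suitable bases identify $\Gamma_1=\Z^m$, $\Gamma_0=\oplus_{i=1}^m n_i\Z$, $\Gamma=\oplus_{i=1}^m\Z/n_i\Z$, with $\pi$ sending the $i$-th basis vector to $u_i$. By Proposition~\ref{p3cocycles}, $\phi$ is then cohomologous to some product $\prod\phi_i^{a_i}\phi_{ij}^{a_{ij}}\phi_{ijk}^{a_{ijk}}$. Assumption (i) kills the $\phi_{ijk}$ contributions: for any $j<k<l$ and $\sigma\in S_3$, the arguments of $\omega_{n_i}$ appearing in $\phi_i(u_{\sigma(j)},u_{\sigma(k)},u_{\sigma(l)})$ are $\delta_{i,\sigma(j)}$ and $\delta_{i,\sigma(k)}$, which sum to at most $1<n_i$ because $\sigma(j)\ne\sigma(k)$; hence $\omega_{n_i}=0$ and $\phi_i$ pairs trivially with every $u_j\wedge u_k\wedge u_l$, and the identical argument handles $\phi_{ij}$. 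On the other hand $\pairing{\phi_{ijk}}{\theta_{ijk}}=\zeta_{(n_i,n_j,n_k)}$ is nontrivial, so (i) forces $a_{ijk}\equiv 0\pmod{(n_i,n_j,n_k)}$, and $\phi$ is cohomologous to $\prod\phi_i^{a_i}\phi_{ij}^{a_{ij}}$.

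To produce the $f$ claimed in (iii), I would invoke that the proof of Proposition~\ref{p3cocycles} already realises $\phi_i=\partial f_i$ and $\phi_{ij}=\partial f_{ij}$ with $f_i,f_{ij}\colon\Z^m\times\Z^m\to\T$ of the form in Lemma~\ref{lconstruction}; products of such functions remain in this class, with the associated characters $\chi$ simply multiplying. The remaining coboundary $\partial g$ (for a $2$-cochain $g\colon\Gamma^2\to\T$) relating $\phi$ to $\prod\phi_i^{a_i}\phi_{ij}^{a_{ij}}$ lifts through $\tilde g=g\circ\pi^{\otimes 2}$, which trivially satisfies the conditions of Lemma~\ref{lconstruction} with character $\chi=1$. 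Setting $f=\tilde g\prod f_i^{a_i}f_{ij}^{a_{ij}}$ then gives a function of the required form whose differential $\partial f$ lifts $\phi$, establishing (iii). The only nontrivial step in this plan is the combinatorial vanishing of $\phi_i$ and $\phi_{ij}$ on $\bigwedge^3\Gamma$; everything else amounts to careful bookkeeping in the constructions of Propositions~\ref{pabelian} and~\ref{p3cocycles}.
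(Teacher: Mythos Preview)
Your proof is correct and follows essentially the same route as the paper: reduce to the standard presentation, use the dual-basis computation from Proposition~\ref{p3cocycles} to see that condition (i) kills the $\phi_{ijk}$ summands, and then observe that the remaining $\phi_i,\phi_{ij}$ (together with any coboundary lifted from $\Gamma$) are already of the Lemma~\ref{lconstruction} form. The paper is slightly terser---it cites the pairing computation already carried out inside Proposition~\ref{p3cocycles} rather than redoing the $\omega_{n_i}$ combinatorics, and compresses your $\tilde g$ step into the phrase ``hence $\phi$ itself is of the same form''---but the argument is the same.
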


\bp The equivalence of (i) and (ii) is clear, since a cocycle on $\Gamma_1$ is a coboundary if and only if it vanishes on $H_3(\Gamma_1)=\bigwedge^3\Gamma_1$. Also, obviously (iii) implies (ii). Therefore the only nontrivial statement is that (i), or (ii), implies (iii). Assume $\phi$ is a cocycle that vanishes on $\bigwedge^3\Gamma\subset H_3(\Gamma)$. We can identify $\Gamma_1$ with $\Z^m$ in such a way that $\Gamma_0=\oplus^{m}_{i=1}n_i\Z$ for some $n_i\ge1$. Then in the notation of the proof of the above proposition the assumption on $\phi$ means that $\phi$ vanishes on the cycles $\theta_{ijk}$, whose homology classes are exactly $u_i\wedge u_j\wedge u_k\in \bigwedge^3\Gamma\subset H_3(\Gamma)$. It follows that~$\phi$ is cohomologous to product of powers of cocycles $\phi_i$ and $\phi_{ij}$. But the cocycles $\phi_i$ and $\phi_{ij}$ are of the form $\partial f$ with $f\colon\Gamma_1\times\Gamma_1\to\T$ as in Lemma~\ref{lconstruction}. Therefore $\phi$ is cohomologous to a cocycle of the form $\partial f$, hence $\phi$ itself is of the same form.
\ep

Since every character of $\bigwedge^3\Gamma\subset\Gamma^{\otimes 3}$ extends to a $3$-character on $\Gamma$, this corollary can also be formulated as follows.

\begin{cor} \label{ccmain2}
With $\Gamma=\Gamma_1/\Gamma_0$ as in the previous corollary, any $\T$-valued $3$-cocycle~$\phi$ on $\Gamma$ can be written as product of a $3$-character $\chi$ on $\Gamma$ and a cocycle $\partial f$ with $f\colon\Gamma_1\times\Gamma_1\to\T$ as in Lemma~\ref{lconstruction}. Such a cocycle $\phi$ lifts to a coboundary on $\Gamma_1$ if and only if $\chi$ vanishes on $\bigwedge^3\Gamma\subset\Gamma^{\otimes 3}$, and in this case $\phi=\partial g$ with $g\colon\Gamma_1\times\Gamma_1\to\T$ as in Lemma~\ref{lconstruction}.
\end{cor}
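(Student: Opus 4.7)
The plan is to lift the explicit cohomological decomposition of Proposition~\ref{p3cocycles} to an actual equality of cocycles, then read off the two other assertions from Proposition~\ref{pabelian} and Corollary~\ref{ccmain}.

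Choose a presentation $\Gamma=\oplus_{i=1}^m\Z/n_i\Z$ together with matching identifications $\Gamma_1=\Z^m$ and $\Gamma_0=\oplus_i n_i\Z$. By Proposition~\ref{p3cocycles}, $[\phi]$ equals a product of integer powers of the classes $[\phi_i]$, $[\phi_{ij}]$, and $[\phi_{ijk}]$. The generators $\phi_{ijk}$ are genuine $3$-characters on $\Gamma$, while the proof of that proposition presents each $\phi_i$ and $\phi_{ij}$ as $\partial$ of an explicit function of the type described in Lemma~\ref{lconstruction}. Collect the $\phi_{ijk}$-factors into a single $3$-character $\chi$ on $\Gamma$, and the remaining factors into a single $\partial F$ whose underlying $F\colon\Gamma_1\times\Gamma_1\to\T$ is again of the form in Lemma~\ref{lconstruction}: products of such functions remain in this class, since the associated characters of $\Gamma_0\otimes\Gamma$ simply multiply.

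At this stage $\phi$ and $\chi\cdot\partial F$ agree in $H^3(\Gamma;\T)$, so $\phi=\chi\cdot\partial F\cdot\partial h$ for some $2$-cochain $h\colon\Gamma\times\Gamma\to\T$. To turn this cohomological identity into a cocycle identity I absorb $\partial h$ into the $\partial F$ piece: pulling $h$ back to a $\Gamma_0^2$-invariant function $\tilde h\colon\Gamma_1\times\Gamma_1\to\T$ places $\tilde h$ in the framework of Lemma~\ref{lconstruction} with \emph{trivial} associated character, and replacing $F$ by $f:=F\tilde h$ keeps us inside the class of Lemma~\ref{lconstruction} with the same character as $F$. This gives $\phi=\chi\cdot\partial f$, which is the first assertion.

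For the second assertion, $\partial f$ is by construction a literal coboundary on $\Gamma_1$, hence the pullback of $\phi$ to $\Gamma_1$ is a coboundary if and only if the pullback of $\chi$ is. The pullback of $\chi$ is still a $3$-character on the free abelian group $\Gamma_1$, so by Proposition~\ref{pabelian}(ii) it is a coboundary iff it vanishes on $\bigwedge^3\Gamma_1\subset\Gamma_1^{\otimes 3}$; since $\chi$ factors through $q^{\otimes3}\colon\Gamma_1^{\otimes3}\to\Gamma^{\otimes3}$ and $q^{\otimes3}$ sends $\bigwedge^3\Gamma_1$ onto $\bigwedge^3\Gamma$, this happens iff $\chi|_{\bigwedge^3\Gamma}=1$. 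When this holds $\phi$ lifts to a coboundary on $\Gamma_1$, and Corollary~\ref{ccmain} (the implication (ii)$\Rightarrow$(iii)) immediately yields the required $g$. I do not expect any serious obstacle here; the one point that needs care is checking that every time we multiply or pull back functions we remain inside the narrow class of Lemma~\ref{lconstruction} and track the associated character of $\Gamma_0\otimes\Gamma$ correctly, but these are routine verifications.
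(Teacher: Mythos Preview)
Your proof is correct. The only point worth noting is the implicit use of Smith normal form when you ``choose matching identifications $\Gamma_1=\Z^m$ and $\Gamma_0=\oplus_i n_i\Z$''; this is legitimate (and is exactly what the paper does in the proof of Corollary~\ref{ccmain}), but you might state it explicitly.

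The paper's route is slightly more direct. Rather than re-opening Proposition~\ref{p3cocycles} to manufacture $\chi$ from the explicit generators $\phi_{ijk}$, the paper simply observes that the pairing of $\phi$ with $H_3(\Gamma)$ restricts to a character of $\bigwedge^3\Gamma$, and then uses divisibility of~$\T$ to extend this to a $3$-character $\chi$ on $\Gamma$. By construction $\phi\chi^{-1}$ vanishes on $\bigwedge^3\Gamma\subset H_3(\Gamma)$, so Corollary~\ref{ccmain} gives $\phi\chi^{-1}=\partial f$ immediately, and the remaining assertions fall out the same way you obtained them. Your approach has the mild advantage of producing an explicit $\chi$ in terms of the cocycle basis, at the cost of redoing the cohomological decomposition and then absorbing the coboundary discrepancy by hand; the paper's approach avoids this bookkeeping entirely by appealing to ccmain once.
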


Let us now look more carefully at the construction of cocycles described in Lemma~\ref{lconstruction}. As Corollary~\ref{ccmain} shows, the class of $3$-cocycles obtained by this construction does not depend on the presentation of $\Gamma$ as quotient of a finite rank free abelian group. It is also clear that there is a lot of redundancy in this construction, since the group $H_3(\Gamma)$ can be much smaller than $\Gamma_0\otimes\Gamma$. The following proposition makes these observations a bit more precise.

\begin{prop} \label{pTor}
Assume $\Gamma$ is a finite abelian group, and write $\Gamma$ as $\Gamma_1/\Gamma_0$ for a finite rank free abelian group~$\Gamma_1$. Let $f\colon\Gamma_1\times\Gamma_1\to\T$ be a function as in Lemma~\ref{lconstruction}, and $\chi$ be the associated character of~$\Gamma_0\otimes\Gamma$ . Then the cohomology class of $\partial f$ in $H^3(\Gamma;\T)$ depends only on the restriction of $\chi$ to
$$
\ker(\Gamma_0\otimes\Gamma\to\Gamma_1\otimes\Gamma)\cong\Tor^\Z_1(\Gamma,\Gamma)\cong \Gamma\otimes\Gamma.
$$
Therefore by varying $\chi$ we get a natural in $\Gamma$ homomorphism
$$
\Hom(\Tor^\Z_1(\Gamma,\Gamma),\T)\to H^3(\Gamma;\T),
$$
whose image is the annihilator of $\bigwedge^3\Gamma\subset H_3(\Gamma)$.
\end{prop}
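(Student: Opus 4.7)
The plan is to decompose the proposition into three claims and attack them in turn. The starting point is the Tor identification: since $\Gamma_1$ is free, so is $\Gamma_0\subset\Gamma_1$, and $0\to\Gamma_0\to\Gamma_1\to\Gamma\to 0$ is a free resolution of $\Gamma$; tensoring it over $\Z$ with $\Gamma$ computes $\Tor^\Z_*(\Gamma,\Gamma)$ and gives the canonical identification $\ker(\Gamma_0\otimes\Gamma\to\Gamma_1\otimes\Gamma)\cong\Tor^\Z_1(\Gamma,\Gamma)$. The further (noncanonical) isomorphism $\Tor^\Z_1(\Gamma,\Gamma)\cong\Gamma\otimes\Gamma$ for finite abelian $\Gamma$ reduces to the cyclic case $\Tor^\Z_1(\Z/m\Z,\Z/n\Z)\cong\Z/(m,n)\Z\cong\Z/m\Z\otimes\Z/n\Z$ and bi-additivity. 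Write $K:=\ker(\Gamma_0\otimes\Gamma\to\Gamma_1\otimes\Gamma)$.

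The heart of the argument is to show that $[\partial f]\in H^3(\Gamma;\T)$ depends only on $\chi|_K$. I would first dispose of the special case that $\chi$ extends to a character $\eta$ of all of $\Gamma_1\otimes\Gamma$: then the function $f(\alpha,\beta):=\eta(\alpha\otimes\bar\beta)$ satisfies the hypotheses of Lemma~\ref{lconstruction} with associated character $\chi$, and being bi-additive (with the second variable factoring through $\Gamma$), one checks by direct expansion that $\partial f=1$. To reduce the general case, suppose characters $\chi$ and $\chi'$ of $\Gamma_0\otimes\Gamma$ agree on $K$; then $\chi(\chi')^{-1}$ vanishes on $K$, hence descends to a character on the image of $\Gamma_0\otimes\Gamma$ in $\Gamma_1\otimes\Gamma$, and by divisibility of $\T$ extends to a character $\eta$ of $\Gamma_1\otimes\Gamma$. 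If $f$ is associated to $\chi$, then $f'(\alpha,\beta):=f(\alpha,\beta)\eta(\alpha\otimes\bar\beta)^{-1}$ is associated to $\chi'$, and $\partial f'=\partial f$ by the special case applied to $\eta(\,\cdot\,\otimes\bar{\,\cdot\,})$. Finally, any two choices of $f$ for the same $\chi$ differ by a function that is $\Gamma_0$-invariant in both variables, hence is the pullback of a $2$-cochain on $\Gamma$, so the two resulting $3$-cocycles differ by a $2$-coboundary on $\Gamma$.

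The homomorphism property is immediate from multiplicativity: if $f_1,f_2$ are associated to $\chi_1,\chi_2$, then $f_1f_2$ is associated to $\chi_1\chi_2$ and $\partial(f_1f_2)=(\partial f_1)(\partial f_2)$. Naturality with respect to a homomorphism $\Gamma\to\Gamma'$ is checked by lifting it to a morphism of presentations $\Gamma_1\to\Gamma'_1$ carrying $\Gamma_0$ into $\Gamma'_0$; pulling back an allowed $f'$ yields an allowed $f$, the pullback of $\chi'$ is the character associated to $f$, and the induced map on kernels agrees with the functoriality of $\Tor$. For the image characterization, the inclusion of the image in the annihilator of $\bigwedge^3\Gamma$ is the easy direction of Corollary~\ref{ccmain} (any $\partial f$ of this form vanishes on $\bigwedge^3\Gamma$), and the reverse inclusion is the nontrivial direction of that same corollary.

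The main obstacle I anticipate is in step one: the computation $\partial f=1$ for a bi-additive $f$ is short, but the bookkeeping to make sure the construction is genuinely well-defined on cohomology requires combining three observations (the bi-additive case, the divisibility/extension step, and the $2$-cochain remark for different lifts), and one must keep straight which data lives on $\Gamma_1$ versus on $\Gamma$ at each stage. Once this is in hand, naturality and the image characterization reduce either to standard homological algebra or directly to Corollary~\ref{ccmain}.
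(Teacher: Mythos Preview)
Your proposal is correct and follows essentially the same approach as the paper's proof. The paper is more terse: it observes that $[\partial f]$ depends only on $\chi$, then shows directly that if $\chi$ vanishes on $K=\ker(\Gamma_0\otimes\Gamma\to\Gamma_1\otimes\Gamma)$ one may extend $\chi$ to a bicharacter $f$ on $\Gamma_1\otimes\Gamma$ with $\partial f$ trivial, and finally invokes Corollary~\ref{ccmain} for the image statement; your write-up simply makes the homomorphism/difference argument and the naturality check more explicit.
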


\bp It is easy to see that the cohomology class of $\partial f$ depends only on $\chi$, so we have a homomorphism
$
\Hom(\Gamma_0\otimes\Gamma,\T)\to H^3(\Gamma;\T).
$
We have to check that if a character $\chi$ of $\Gamma_0\otimes\Gamma$ vanishes on $\ker(\Gamma_0\otimes\Gamma\to\Gamma_1\otimes\Gamma)$, then the image of~$\chi$ in $H^3(\Gamma;\T)$ is zero. But this is clear, since we can extend~$\chi$ to a character $f$ of $\Gamma_1\otimes\Gamma$, and then $f$, considered as a function on $\Gamma_1\times\Gamma_1$, is of the type described in Lemma~\ref{lconstruction}, with associated character $\chi$, and $f$ is a $2$-character, so $\partial f=0$.

Naturality of the homomorphism $\Hom(\Tor^\Z_1(\Gamma,\Gamma),\T)\to H^3(\Gamma;\T)$ in $\Gamma$ is straightforward to check. The statement that its image coincides with the annihilator of $\bigwedge^3\Gamma\subset H_3(\Gamma)$ follows from Corollary~\ref{ccmain}.
\ep

\raggedright

\begin{bibdiv}
\begin{biblist}

\bib{MR1127037}{article}{
      author={Artin, Michael},
      author={Schelter, William},
      author={Tate, John},
       title={Quantum deformations of {${\rm GL}_n$}},
        date={1991},
        ISSN={0010-3640},
     journal={Comm. Pure Appl. Math.},
      volume={44},
      number={8-9},
       pages={879\ndash 895},
         url={http://dx.doi.org/10.1002/cpa.3160440804},
         doi={10.1002/cpa.3160440804},
      review={\MR{1127037 (92i:17014)}},
}

\bib{MR1378260}{article}{
    author = {Banica, Teodor},
     title = {Th\'eorie des repr\'esentations du groupe quantique compact
              libre {${\rm O}(n)$}},
   journal = {C. R. Acad. Sci. Paris S\'er. I Math.},
    volume = {322},
      date = {1996},
    number = {3},
     pages = {241--244},
      ISSN = {0764-4442},
      eprint={\href{http://arxiv.org/abs/math/9806063v4}{{\tt arXiv:math/9806063v4 [math.QA]}}},
    review = {\MR{1378260 (97A:46108)}},
}

\bib{MR1679171}{article}{
      author={Banica, Teodor},
       title={Representations of compact quantum groups and subfactors},
        date={1999},
        ISSN={0075-4102},
     journal={J. Reine Angew. Math.},
      volume={509},
       pages={167\ndash 198},
      eprint={\href{http://arxiv.org/abs/math/9804015}{{\tt arXiv:math/9804015
  [math.QA]}}},
         url={http://dx.doi.org/10.1515/crll.1999.037},
         doi={10.1515/crll.1999.037},
      review={\MR{1679171 (2000g:46087)}},
}

\bib{MR2023750}{article}{
    author = {Bichon, Julien},
     title = {The representation category of the quantum group of a
              non-degenerate bilinear form},
   journal = {Comm. Algebra},
    volume = {31},
      date = {2003},
    number = {10},
     pages = {4831--4851},
      ISSN = {0092-7872},
    review = {\MR{1998031 (2004g:20068)}},
       doi = {10.1081/AGB-120023135},
       url = {http://dx.doi.org/10.1081/AGB-120023135},
}

\bib{MR1324339}{book}{
      author={Brown, Kenneth~S.},
       title={Cohomology of groups},
      series={Graduate Texts in Mathematics},
   publisher={Springer-Verlag},
     address={New York},
        date={1994},
      volume={87},
        ISBN={0-387-90688-6},
        note={Corrected reprint of the 1982 original},
      review={\MR{1324339 (96a:20072)}},
}

\bib{MR934283}{inproceedings}{
      author={Drinfel{\cprime}d, V.~G.},
       title={Quantum groups},
        date={1987},
   booktitle={Proceedings of the {I}nternational {C}ongress of
  {M}athematicians, {V}ol. 1, 2 ({B}erkeley, {C}alif., 1986)},
   publisher={Amer. Math. Soc.},
     address={Providence, RI},
       pages={798\ndash 820},
      review={\MR{934283 (89f:17017)}},
}

\bib{MR1047964}{article}{
      author={Drinfel{\cprime}d, V.~G.},
       title={Quasi-{H}opf algebras},
        date={1989},
        ISSN={0234-0852},
     journal={Algebra i Analiz},
      volume={1},
      number={6},
       pages={114\ndash 148},
        note={Translation in Leningrad Math. J. \textbf{1} (1990), no. 6,
  1419--1457},
      review={\MR{1047964 (91b:17016)}},
}

\bib{MR2511635}{article}{
      author={Echterhoff, Siegfried},
      author={Nest, Ryszard},
      author={Oyono-Oyono, Herv{\'e}},
       title={Fibrations with noncommutative fibers},
        date={2009},
        ISSN={1661-6952},
     journal={J. Noncommut. Geom.},
      volume={3},
      number={3},
       pages={377\ndash 417},
      eprint={\href{http://arxiv.org/abs/0810.0118}{{\tt arXiv:0810.0118
  [math.KT]}}},
         url={http://dx.doi.org/10.4171/JNCG/41},
         doi={10.4171/JNCG/41},
      review={\MR{2511635 (2010g:19004)}},
}

\bib{MR1395206}{article}{
      author={Enock, Michel},
      author={Va{\u\i}nerman, Leonid},
       title={Deformation of a {K}ac algebra by an abelian subgroup},
        date={1996},
        ISSN={0010-3616},
     journal={Comm. Math. Phys.},
      volume={178},
      number={3},
       pages={571\ndash 596},
         url={http://projecteuclid.org/getRecord?id=euclid.cmp/1104286767},
         doi={10.1007/BF02108816},
      review={\MR{1395206 (97f:46115)}},
}

\bib{MR1153249}{book}{
      author={Fulton, William},
      author={Harris, Joe},
       title={Representation theory},
      series={Graduate Texts in Mathematics},
   publisher={Springer-Verlag},
     address={New York},
        date={1991},
      volume={129},
        ISBN={0-387-97527-6; 0-387-97495-4},
         url={http://dx.doi.org/10.1007/978-1-4612-0979-9},
         doi={10.1007/978-1-4612-0979-9},
        note={A first course, Readings in Mathematics},
      review={\MR{1153249 (93a:20069)}},
}

\bib{MR1809304}{article}{
    author = {Hai, Phung Ho},
     title = {On matrix quantum groups of type {$A\sb n$}},
   journal = {Internat. J. Math.},
    volume = {11},
      date = {2000},
    number = {9},
     pages = {1115--1146},
      ISSN = {0129-167X},
    review = {\MR{1809304 (2001m:16064)}},
       doi = {10.1142/S0129167X00000581},
      eprint = {\href{http://arxiv.org/abs/q-alg/9708007}{{\tt arXiv:q-alg/9708007 [math.QA]}}},
       url = {http://dx.doi.org/10.1142/S0129167X00000581},
}

\bib{MR1237835}{incollection}{
      author={Kazhdan, David},
      author={Wenzl, Hans},
       title={Reconstructing monoidal categories},
        date={1993},
   booktitle={I. {M}. {G}el\cprime fand {S}eminar},
      series={Adv. Soviet Math.},
      volume={16},
   publisher={Amer. Math. Soc.},
     address={Providence, RI},
       pages={111\ndash 136},
      review={\MR{1237835 (95e:18007)}},
}

\bib{MR1116413}{article}{
    author = {Levendorski{\u\i}, Serge},
    author = {Soibelman, Yan},
     title = {Algebras of functions on compact quantum groups, {S}chubert
              cells and quantum tori},
   journal = {Comm. Math. Phys.},
    volume = {139},
      date = {1991},
    number = {1},
     pages = {141--170},
      ISSN = {0010-3616},
    review = {\MR{1116413 (92h:58020)}},
       url = {http://projecteuclid.org/getRecord?id=euclid.cmp/1104203139},
}

\bib{mrozinski}{misc}{
      author={Mrozinski, Colin},
       title={Quantum automorphism groups and $\mathrm{SO}(3)$-deformations},
        date={2013},
         how={preprint},
      eprint={\href{http://arxiv.org/abs/1303.7091}{{\tt arXiv:1303.7091 [math.QA]}}},
}

\bib{MR2643923}{article}{
      author={Neshveyev, Sergey},
      author={Tuset, Lars},
       title={The {D}irac operator on compact quantum groups},
        date={2010},
        ISSN={0075-4102},
     journal={J. Reine Angew. Math.},
      volume={641},
       pages={1\ndash 20},
      eprint={\href{http://arxiv.org/abs/math/0703161}{{\tt arXiv:math/0703161
  [math.OA]}}},
         url={http://dx.doi.org/10.1515/CRELLE.2010.026},
         doi={10.1515/CRELLE.2010.026},
      review={\MR{2643923}},
}

\bib{MR2861394}{article}{
      author={Neshveyev, Sergey},
      author={Tuset, Lars},
       title={{$K$}-homology class of the {D}irac operator on a compact quantum
  group},
        date={2011},
        ISSN={1431-0635},
     journal={Doc. Math.},
      volume={16},
       pages={767\ndash 780},
      eprint={\href{http://arxiv.org/abs/1102.0248}{{\tt arXiv:1102.0248
  [math.OA]}}},
      review={\MR{2861394}},
}

\bib{MR2914062}{article}{
      author={Neshveyev, Sergey},
      author={Tuset, Lars},
       title={Quantized algebras of functions on homogeneous spaces with
  {P}oisson stabilizers},
        date={2012},
        ISSN={0010-3616},
     journal={Comm. Math. Phys.},
      volume={312},
      number={1},
       pages={223\ndash 250},
      eprint={\href{http://arxiv.org/abs/1103.4346}{{\tt arXiv:1103.4346
  [math.OA]}}},
         url={http://dx.doi.org/10.1007/s00220-012-1455-6},
         doi={10.1007/s00220-012-1455-6},
      review={\MR{2914062}},
}

\bib{neshveyev-tuset-book}{misc}{
      author={Neshveyev, Sergey},
      author={Tuset, Lars},
       title={Compact quantum groups and their representation categories},
         how={manuscript},
        date={2013},
        note={to be published in {C}ours {S}p\'{e}cials\'{e}s de la {SMF};
  available at the first author's website},
}

\bib{MR1673475}{article}{
      author={Ohn, Christian},
       title={Quantum {${\rm SL}(3,{\bf C})$}'s with classical representation
  theory},
        date={1999},
        ISSN={0021-8693},
     journal={J. Algebra},
      volume={213},
      number={2},
       pages={721\ndash 756},
      eprint={\href{http://arxiv.org/abs/q-alg/9711005}{{\tt
  arXiv:q-alg/9711005 [math.QA]}}},
         url={http://dx.doi.org/10.1006/jabr.1998.7658},
         doi={10.1006/jabr.1998.7658},
      review={\MR{1673475 (2000c:17028)}},
}

\bib{MR2106933}{incollection}{
      author={Ohn, Christian},
       title={Quantum {${\rm SL}(3,\mathbb{C})$}'s: the missing case},
        date={2005},
   booktitle={Hopf algebras in noncommutative geometry and physics},
      series={Lecture Notes in Pure and Appl. Math.},
      volume={239},
   publisher={Dekker},
     address={New York},
       pages={245\ndash 255},
      eprint={\href{http://arxiv.org/abs/math/0210244}{{\tt arXiv:math/0210244
  [math.QA]}}},
      review={\MR{2106933 (2005h:20110)}},
}

\bib{MR2307417}{article}{
   author={Pinzari, Claudia},
   title={The representation category of the Woronowicz quantum group ${\rm
   S}_\mu {\rm U}(d)$ as a braided tensor $C^*$-category},
   journal={Internat. J. Math.},
   volume={18},
   date={2007},
   number={2},
   pages={113--136},
   issn={0129-167X},
   review={\MR{2307417 (2008k:46212)}},
   doi={10.1142/S0129167X07003972},
}

\bib{MR2825504}{article}{
      author={Pinzari, Claudia},
      author={Roberts, John~E.},
       title={A rigidity result for extensions of braided tensor
  {$C^*$}-categories derived from compact matrix quantum groups},
        date={2011},
        ISSN={0010-3616},
     journal={Comm. Math. Phys.},
      volume={306},
      number={3},
       pages={647\ndash 662},
      eprint={\href{http://arxiv.org/abs/1007.4480}{{\tt arXiv:1007.4480
  [mtah.OA]}}},
         url={http://dx.doi.org/10.1007/s00220-011-1260-7},
         doi={10.1007/s00220-011-1260-7},
      review={\MR{2825504 (2012h:46125)}},
}

\bib{MR1015339}{article}{
      author={Reshetikhin, N.~{\relax Yu}.},
      author={Takhtadzhyan, L.~A.},
      author={Faddeev, L.~D.},
       title={Quantization of {L}ie groups and {L}ie algebras},
        date={1989},
        ISSN={0234-0852},
     journal={Algebra i Analiz},
      volume={1},
      number={1},
       pages={178\ndash 206},
        note={Translation in Leningrad Math. J. {\bf 1} (1990), no. 1,
  193---225},
      review={\MR{1015339 (90j:17039)}},
}

\bib{sangha}{misc}{
      author={Sangha, Amandip},
       title={KK-fibrations arising from Rieffel deformations},
        date={2011},
         how={preprint},
      eprint={\href{http://arxiv.org/abs/1109.5968}{{\tt arXiv:1109.5968 [math.OA]}}},
}

\bib{MR2132671}{article}{
    author = {Tuba, Imre},
    author = {Wenzl, Hans},
     title = {On braided tensor categories of type {$BCD$}},
   journal = {J. Reine Angew. Math.},
    volume = {581},
      date = {2005},
     pages = {31--69},
      ISSN = {0075-4102},
    review = {\MR{2132671 (2006b:18003)}},
       doi = {10.1515/crll.2005.2005.581.31},
       url = {http://dx.doi.org/10.1515/crll.2005.2005.581.31},
}

\bib{MR2288954}{book}{
      author={Williams, Dana P.},
       title={Crossed products of $C{\sp \ast}$-algebras},
      series={Mathematical Surveys and Monographs},
      volume={134},
   publisher={American Mathematical Society},
     address={Providence, RI},
        year={2007},
       pages={xvi+528},
        ISBN={978-0-8218-4242-3; 0-8218-4242-0},
      review={\MR{2288954 (2007m:46003)}},
}

\bib{MR943923}{article}{
      author={Woronowicz, S.~L.},
       title={Tannaka-{K}re\u\i n duality for compact matrix pseudogroups.
  {T}wisted {${\rm SU}(N)$} groups},
        date={1988},
        ISSN={0020-9910},
     journal={Invent. Math.},
      volume={93},
      number={1},
       pages={35\ndash 76},
         url={http://dx.doi.org/10.1007/BF01393687},
         doi={10.1007/BF01393687},
      review={\MR{943923 (90e:22033)}},
}

\bib{MR1266253}{article}{
    author = {Woronowicz, S. L.},
    author = {Zakrzewski, S.},
     title = {Quantum deformations of the {L}orentz group. {T}he {H}opf
              {$\sp *$}-algebra level},
   journal = {Compositio Math.},
    volume = {90},
      date = {1994},
    number = {2},
     pages = {211--243},
      ISSN = {0010-437X},
    review = {\MR{1266253 (95b:17021)}},
       url = {http://www.numdam.org/item?id=CM_1994__90_2_211_0},
}

\bib{MR3009718}{article}{
      author={Yamashita, Makoto},
       title={Equivariant comparison of quantum homogeneous spaces},
        date={2013},
        ISSN={0010-3616},
     journal={Comm. Math. Phys.},
      volume={317},
      number={3},
       pages={593\ndash 614},
      eprint={\href{http://arxiv.org/abs/1109.2991}{{\tt arXiv:1109.2991
  [math.OA]}}},
         url={http://dx.doi.org/10.1007/s00220-012-1594-9},
         doi={10.1007/s00220-012-1594-9},
      review={\MR{3009718}},
}

\end{biblist}
\end{bibdiv}

\bigskip

\end{document}